\documentclass[11pt]{article}
\usepackage{t1enc}
\usepackage[latin1]{inputenc}
\usepackage[english]{babel}
\usepackage{amsmath,amsthm}
\usepackage{amsfonts}
\usepackage{latexsym}
\usepackage[dvips]{graphicx}
\usepackage{graphicx}
\usepackage{float}
\usepackage[natural]{xcolor}
\usepackage{algorithm}
\usepackage{algorithmic}
\usepackage{enumerate}
\usepackage{appendix}
\usepackage{multirow}
\usepackage{xcolor}
\usepackage[colorlinks,linkcolor=blue]{hyperref}
\usepackage{setspace}
\usepackage{comment}
\usepackage{fullpage}
\usepackage{amssymb} 
\usepackage{listings}
\newtheorem{theorem}{Theorem}\numberwithin{theorem}{section}
\newtheorem{lemma}[theorem]{Lemma}\numberwithin{theorem}{section}
\newtheorem{proposition}[theorem]{Proposition}
\numberwithin{theorem}{section}

\newtheorem{problem}[theorem]{Problem}

\newtheorem{definition}[theorem]{Definition}
\newtheorem{corollary}[theorem]{Corollary}

\def\eps{\varepsilon}

\def\int{\textrm{int}}
\numberwithin{equation}{section}

\begin{document}

\onehalfspacing
\title{
The perturbation threshold of degenerate graphs}

\author{
Jie Han$^a$, Seonghyuk Im${}^{b}{}^{c}$, Bin Wang$^a$, and 
Junxue Zhang$^a$ \\
\footnotesize $^a$ School of Mathematics and Statistics, Beijing Institute of Technology,  China\\ 
\footnotesize $^b$ Department of Mathematical Science, KAIST, South Korea\\
\footnotesize $^c$ Extremal Combinatorics and Probability Group (ECOPRO), Institute for Basic Science (IBS), South Korea \\
\footnotesize  han.jie@bit.edu.cn, seonghyuk@kaist.ac.kr, bin.wang@bit.edu.cn, jxuezhang@163.com}

\date{}


\maketitle 
\begin{abstract}

We show that for any $d\ge 2$ and $\Delta>0$ there exists $\eta>0$ such that the following holds:
Let $G$ be an $n$-vertex graph with at least $\Omega(n^2)$ edges and let $H$ be an $n$-vertex $d$-degenerate graph with maximum degree at most $\Delta$. 
Then with high probability, $G \cup G(n, n^{-1/d - \eta})$ contains a copy of $H$.
We also prove that the same conclusion extends to $d$-regular graphs with  $d\ge 4$ satisfying a certain edge expansion property, with the threshold improved to  $n^{-2/d - \eta}$. Such a property is satisfied by almost all
 $d$-regular graphs and for even $d$, by the $(d/2)$-th power of a Hamilton cycle.

\end{abstract}

\section{Introduction}

A binomial random graph or Erd\H{o}s--R\'{e}nyi graph $G(n,p)$ is a graph on the vertex set $[n]:=\{1,2,\ldots,n\}$ where each pair of vertices forms an edge independently with probability $p$.
A central topic in random graph theory is the study of thresholds for graph properties, defined as critical probabilities at which a random graph typically acquires a given property. 
Formally, given a graph property $\mathcal{P}$, we say that $G(n,p)$ has the property $\mathcal{P}$ with high probability (w.h.p.) if $\lim_{n\rightarrow \infty} \mathbb{P}[G(n,p)\in \mathcal{P}]=1$. 
A function $\hat{p}: \mathbb{N}\rightarrow [0,1]$ is called a \emph{threshold} for the property $\mathcal{P}$ if w.h.p. $G(n,p)$ has the property $\mathcal{P}$ when $p=\omega(\hat{p})$ and w.h.p. $G(n,p)$ does not have the property $\mathcal{P}$ when $p=o(\hat{p})$.

A classical result of Bollob\'{a}s and Thomason \cite{bollobas1987threshold} establishes that every monotone property admits a threshold. In particular, the property of containing a specific spanning subgraph $H$ is monotone and therefore has a threshold. 
A notable example is a result of Erd\H{o}s and R\'{e}nyi \cite{erdHos1966existence}, who proved that the threshold for the existence of a perfect matching in $G(n,p)$ is $\frac{\log n}{n}$. This spurred extensive research into determining thresholds for the containment of various fixed spanning structures in $G(n,p)$.

\subsection{Randomly perturbed graphs}
Bohman, Frieze, and Martin \cite{MR1943857} introduced the model of \emph{randomly perturbed graphs} $G\cup G(n,p)$, defined as the union of a deterministic graph $G$ and a binomial random graph $G(n,p)$ on the same vertex set $V(G)$. 
Let $\alpha >0$ be a constant and let $G_\alpha$ be an $n$-vertex graph with minimum degree at least $\alpha n$. 
Bohman, Frieze, and Martin \cite{MR1943857} proved that for every $\alpha >0$, there exists $C=C(\alpha)>0$ such that if $p=C/n$, then for any graph $G_\alpha$, w.h.p.~$G_\alpha \cup G(n,p)$ is Hamiltonian. 
Note that the threshold for Hamiltonicity in $G(n,p)$ alone is $\frac{\log n}{n}$; thus, the addition of the edges in $G_\alpha$ significantly reduces the required edge probability.
This reduced probability is called the \emph{perturbation threshold} for the graph property.
This result initiated a broad line of research on various spanning structures in randomly perturbed graphs, such as bounded degree spanning trees, powers of Hamilton cycles and so on~\cite{HPH,MR3922775,Bennett2017AddingRE,MR4025392,MR4130332,MR4052848,HPER,MR4052851,MR3595872}.

In 2020, B\"{o}ttcher, Montgomery, Parczyk, and Person \cite{MR4130332} proved a result on embedding general bounded degree graphs in randomly perturbed graphs.
They proved that for every $\Delta \geq 5$, if $H$ is an $n$-vertex graph with maximum degree at most $\Delta$, then for any $\alpha >0$, there exists a constant $C=C(\alpha, \Delta)$ such that w.h.p. $G_\alpha\cup G(n,Cn^{-\frac{2}{\Delta +1}})$ contains a copy of $H$. 
Again, the edge probability $n^{-\frac{2}{\Delta +1}}$ is significantly smaller than the threshold in $G(n,p)$ alone, which is $n^{-\frac{2}{\Delta+1} }(\log n)^{-\frac{2}{\Delta(\Delta+1)} }$, as proved by Frankston, Kahn, Narayanan, and Park~\cite{MR4298747}.

Another commonly studied family of sparse graphs is the family of $d$-degenerate graphs.
Note that $K_{1, n-1}$ is $1$-degenerate but it is unlikely to exist in $G_\alpha \cup G(n, p)$ unless $p$ is close to $1$.
Thus, it is natural to consider $d$-degenerate graphs with additional maximum degree conditions.
For the case when $d=1$, i.e., bounded degree forests, Krivelevich, Kwan, and Sudakov \cite{MR3595872} proved that for any $\alpha >0$ and $\Delta \geq 2$, there exists $C=C(\alpha, \Delta)$ such that for every $n$-vertex tree $T$ with $\Delta(T) \leq \Delta$, w.h.p. $G_\alpha\cup G(n, C/n)$ contains a copy of $T$.
This was extended by B\"{o}ttcher, Han, Kohayakawa, Montgomery, Parczyk, and Person~\cite{MR4025392}, who proved that w.h.p. $G_\alpha\cup G(n, C/n)$ contains all bounded degree $n$-vertex trees simultaneously.
Our first main theorem shows that for any $d \geq 2$, the threshold for bounded degree $d$-degenerate graphs in randomly perturbed graphs is polynomially smaller than $n^{-1/d}$, which is the threshold in $G(n, p)$ alone (shown by Riordan~\cite{MR1762785} for $d\ge 3$ and by Chen, Han, and Luo~\cite{chen2024thresholds} for $d=2$).

\begin{theorem}
  \label{maindegenerate}
    For every integer $d\geq2$ and constants $\varepsilon, \Delta > 0$, there exists $\eta>0$ such that the following holds. 
    If $G$ is an $n$-vertex graph with at least $\varepsilon n^2$ edges and $H$ is an $n$-vertex $d$-degenerate graph with maximum degree at most $\Delta$, then w.h.p. $G \cup G(n, n^{-\frac{1}{d}-\eta})$ contains a copy of $H$. 
\end{theorem}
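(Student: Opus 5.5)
Our plan is to combine the dense structure forced by $G$ with a vertex-by-vertex embedding in the random graph, where the deterministic edges save a small polynomial factor at each step. By the K\H{o}v\'ari--S\'os--Tur\'an theorem (or dependent random choice), a graph with $\varepsilon n^2$ edges contains many large complete bipartite subgraphs. More usefully, after deleting low-degree vertices it has a subgraph of linear size and linear minimum degree. We fix a degeneracy ordering $v_1,\dots,v_n$ of $H$, in which each $v_i$ has at most $d$ neighbours $v_j$ with $j<i$. Since $\Delta(H)\le\Delta$, we can greedily choose a set $I\subseteq V(H)$ of $\gamma n$ vertices, with $\gamma=\gamma(\varepsilon,\Delta)$ small, that is $3$-independent: its members are pairwise at distance at least $3$. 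Moreover, every $v\in I$ has exactly $d$ back-neighbours, or at least one back-neighbour, whichever is needed; vertices with fewer than $d$ back-neighbours are handled more cheaply.

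The key idea is that one edge of each back-star of a vertex in a linear-size set is supplied by $G$. In $G(n,p)$ alone, $p=n^{-1/d}$ is critical because a vertex with $d$ back-neighbours has about $np^d$ candidates, roughly $1$. If instead one of the $d$ back-edges comes from $G$, the candidates number about $\varepsilon n\cdot p^{d-1}=n^{1/d-\eta(d-1)}\gg 1$. We will therefore restructure $H$ via a decomposition lemma. We partition $V(H)=A\cup B$ with $|B|=\beta n$, where $B$ is a $3$-independent set whose vertices are placed last in the ordering. Each $b\in B$ has a designated back-neighbour $a(b)\in A$, and the edge $a(b)b$ is intended to be embedded in $G$. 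We first embed $H[A]$, then embed the vertices of $B$ simultaneously via a bipartite matching argument: each $b$ must go to a vertex adjacent in $G$ to $\phi(a(b))$ and in $G(n,p)$ to the images of its other $\le d-1$ back-neighbours. Because $B$ is $3$-independent, these choices interact only through injectivity, so Hall's condition and Janson's inequality on the random candidate sets make this step work.

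The heart of the proof, and the step we expect to be the main obstacle, is embedding $H[A]$ into $G(n,p)$ on the leftover vertices when $|A|=(1-\beta)n$ and $p=n^{-1/d-\eta}$. This is still below the $G(n,p)$ threshold for almost-spanning $d$-degenerate graphs. We would handle it by iterating the same trick. We split $H$ into $k=O(1/\eta)$ layers $B_1,\dots,B_k$ of $3$-independent sets along the ordering. We use $G$'s linear-degree structure through a regularity-type partition: applying the Szemer\'edi regularity lemma to $G$ yields a dense regular pair $(X,Y)$. We map the designated back-neighbours into $X$ and the corresponding vertices into $Y$ in a balanced way, so every vertex of $H$ with $d$ back-neighbours has at least one back-edge realised in $G$.

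For vertices with fewer back-edges the count $np^{d-1}$ already suffices. The remaining difficulty is the last $o(n)$ vertices, which must be placed exactly on the leftover set. For these we plan an absorbing structure: small gadgets built from $G$-edges between regular pairs and $G(n,p)$-edges, reserved at the start, which can swap images to cover any leftover vertex. Verifying that these gadgets exist in sufficient number at $p=n^{-1/d-\eta}$ is the main technical calculation. We would handle it with Janson's inequality and a union bound over $n^{O(1)}$ configurations, choosing $\eta$ small in terms of $d,\Delta,\varepsilon$.
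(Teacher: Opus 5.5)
Your proposal has a genuine gap. The scheme is a vertex-by-vertex embedding (greedy, plus a matching for the last layer). At $p=n^{-1/d-\eta}$ such a scheme needs almost every vertex of $H$ to receive a back-edge from $G$, and $G$ cannot supply that.

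\emph{Why the greedy scheme fails.} A vertex whose $d$ back-edges must all be realised in $G(n,p)$ has about $np^d=n^{-d\eta}=o(1)$ candidates. So in your scheme every vertex may use at most $d-1$ back-edges of $G(n,p)$. Summing back-degrees over any ordering, at least $e_H-(d-1)n$ edges of the copy must lie in $G$. For $H$ the $d$-th power of a Hamilton path, $e_H=dn-\binom{d+1}{2}$, so you need $n-O(1)$ edges of $G$.

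Now let $G$ consist of all pairs meeting a fixed set $S$ of $2\varepsilon n$ vertices; this has at least $\varepsilon n^2$ edges for small $\varepsilon$. Any copy of $H$ then uses at most $\Delta|S|=4d\varepsilon n<n/2$ edges of $G$ once $\varepsilon<1/(8d)$. So your requirement that every vertex with $d$ back-neighbours has a back-edge realised in $G$ is impossible in general. A single regular pair $(X,Y)$, whose clusters have size $n/M$, could not carry it anyway, and iterating layers or adding absorbers does not change this count.

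\emph{A local error.} Once $B$ is moved to the end of the ordering, every neighbour of $b$ becomes a back-neighbour. So $b$ has $\deg_H(b)$ back-neighbours (e.g.\ $2d$ for the power of a path), not $d$. The candidate count $\varepsilon n p^{\deg_H(b)-1}$ then tends to $0$.

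\emph{The missing idea.} Drop per-vertex candidate counting and control the $1$-density globally. The paper orients $H$ along the degeneracy ordering, so out-degrees are at most $d$. It chooses an independent set $X$ with $|X|\le\gamma n/2$ (randomly, then cleaned up) that meets the $(K+1)$-out-ball of every vertex whose $K$-out-ball has at least $K/2$ vertices. It then removes only the sparse set $M^-(X)$: one arc from each vertex having an out-neighbour in $X$.

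It shows $m_1(H\setminus M^-(X))\le d-\varepsilon'$ in two regimes:
\begin{itemize}
\item Small subgraphs already satisfy $e\le dm-\binom{d+1}{2}\le d(m-1)-1/2$ by degeneracy.
\item In a large subgraph every such out-ball loses an out-edge. Since each vertex lies in boundedly many balls, a constant fraction of vertices have out-degree at most $d-1$ in the subgraph.
\end{itemize}

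The paper then builds a $\frac{4}{\varepsilon n}$-vertex-spread distribution on embeddings of $H$ that send the in-stars of $M^-(X)$ onto $G$-edges at vertices of $G$-degree at least $\varepsilon n/2$. Kelly--M\"uyesser--Pokrovskiy together with Frankston--Kahn--Narayanan--Park then yield $H\setminus M^-(X)$ in $G(n,p)$, since $n^{-1/d-\eta}\ge n^{-1/(d-\varepsilon')}\log n$.

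Only $O(\gamma n)$ edges come from $G$; the polynomial saving comes from the density drop, not from any local count. Your remark that deleting a well-spread set lowers the density is in this spirit. To make it work you would need $X$ to hit every large out-ball, delete edges rather than whole vertices, and embed via spreadness rather than greedily.
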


We note that in this theorem, we only require that the base graph $G$ has at least $\Omega(n^2)$ edges instead of a linear minimum degree condition.
This aligns with the recent results by the authors~\cite{han2025perturbation}, which replaces the minimum degree condition on $G$ with a much weaker density condition for embedding $K_r$-factors, powers of Hamilton cycles, and bounded degree graphs in randomly perturbed graphs.

Our second main theorem considers a family of $d$-regular graphs with certain expansion property.
Note that a $K_{d+1}$-factor is a $d$-regular graph and its threshold in randomly perturbed graphs is $n^{-2/(d+1)}$, as shown in~\cite{MR3922775}.
We improve this bound when $H$ is far from being a disjoint union of $K_{d+1}$'s in the sense that it has a ``good'' edge-connectivity property.
For a set $X\subseteq V(H)$, we denote by $\partial(X)$ the set of edges with exactly one endpoint in $X$.
\begin{theorem}
  \label{mainthm:regular}
    For every integer $d\geq 3$ and constants $0<\varepsilon, \gamma\le 1/2$, there exists a constant $\eta>0$ such that the following holds for sufficiently large $n$. 
    Let $H$ be an $n$-vertex $d$-regular graph satisfying $|\partial(X)|\ge d+1$ for every $X\subseteq V(H)$ with $2\le |X|\le \gamma n$.
    Then for any $n$-vertex graph $G$ with at least $\varepsilon n^2$ edges, w.h.p. $G\cup G(n,n^{-\frac{2}{d}-\eta})$ contains a copy of $H$. 
\end{theorem}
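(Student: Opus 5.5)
We prove Theorem~\ref{mainthm:regular} by absorption: the dense graph $G$ supplies a small rigid framework, and the random graph $G(n,p)$ with $p=n^{-2/d-\eta}$ embeds almost all of $H$.

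\textbf{Step 1: regularity in $G$.} Apply Szemer\'edi's regularity lemma to $G$. Since $e(G)\ge\varepsilon n^2$, there is an $\varepsilon'$-regular pair $(A,B)$ of density at least $\varepsilon/2$ with $|A|=|B|=\beta n$ for some $\beta=\beta(\varepsilon)>0$. Such a pair lets us embed any bounded-degree bipartite graph into $(A,B)$, up to a linear number of vertices, using only edges of $G$. We will use the edges of $G$ in $(A,B)$ to save random edges at a linear number of "special" vertices of $H$.

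\textbf{Step 2: structural decomposition of $H$.} The expansion hypothesis $|\partial(X)|\ge d+1$ for $2\le |X|\le \gamma n$ is used to control local densities. A subgraph $F$ of $H$ with $v$ vertices spans $e(F)=(dv-|\partial(V(F))|)/2\le (dv-d-1)/2$ edges when $v\le\gamma n$. Consequently, for every small connected piece $F$ rooted at its boundary, the expected number of copies in $G(n,p)$ is roughly
\[
n^{v}p^{e(F)}\ge n^{v-(2/d+\eta)(dv-d-1)/2},
\]
which is polynomially large as long as $v$ is bounded and $\eta$ is small. A bare count only gives $n^{1+1/d}$ times a small correction, so the real content is a \emph{rooted} version: when extending an embedding from already embedded boundary vertices, each new piece has at least one "spare" boundary edge. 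That spare edge saves a factor $p^{1/2}$ relative to the $K_{d+1}$-factor bound $n^{-2/(d+1)}$.

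We then choose a linear-size set $S\subseteq V(H)$ of vertices that are pairwise far apart in $H$ (distance greater than $4$, possible because $\Delta(H)=d$ is bounded). For each $v\in S$, some edges at $v$ will be realized by $G$-edges in $(A,B)$ instead of random edges. The remaining graph $H$ minus these edges has $m_1$-density strictly less than $d/2$ on small sets. Here the expansion property is what guarantees that removing edges is enough to drop below the critical density.

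\textbf{Step 3: embedding via a spread measure.} Embed $H'=H$ minus the $G$-edges into $G(n,p)$ using the Frankston--Kahn--Narayanan--Park / Park--Pham spread-measure approach. We construct a random embedding of $H'$, constrained to map the special vertices and their $G$-neighbours into $(A,B)$ consistently with $G$. We then show that the associated edge distribution is $q$-spread with $q=O(n^{-2/d-2\eta})$, i.e. $\Pr[F\subseteq \phi(H')]\le q^{|F|}$ for every edge set $F$. The spread bound for a subgraph $F$ reduces to counting extensions, which are bounded using $e(F')\le (d|V(F')|-d-1)/2$ for the components $F'$. Together with the $G$-edge savings, this yields the needed exponent. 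Large sets ($|X|>\gamma n$) are handled trivially, since there $n^{-|X|}$ dominates. Park--Pham then gives $H'\subseteq G(n,Cq\log n)$ w.h.p., which is within $n^{-2/d-\eta}$.

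\textbf{Main obstacle.} The crux is Step 3's spread estimate: designing the random embedding so that the $G$-edges in the regular pair are genuinely "free", and verifying spreadness uniformly over all small $F$. The extremal configurations are near-cliques $K_{d+1}$ minus an edge with exactly $d+1$ boundary edges. I would first try to handle these by making sure every such piece contains a special vertex of $S$, using a greedy choice of $S$ hitting all bounded-size tight sets. This is possible because there are only $O(n)$ bounded-size connected sets through each vertex.
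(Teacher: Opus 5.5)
There is a genuine gap, and it lies in where you locate the difficulty. For the spread step at $q=n^{-2/d-2\eta}$ you need a \emph{uniform} bound $m_1(H')\le d/2-\varepsilon'$ with $\eta\ll\varepsilon'$. That is, every subgraph $F$ of the graph $H'$ left for $G(n,p)$ must satisfy $e_F\le (d/2-\varepsilon')(v_F-1)$.

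Bounded-size $F$ are not the issue. The expansion hypothesis alone gives $e_F\le (dv_F-d-1)/2=\frac d2(v_F-1)-\frac12$, a margin of $1/(2(v_F-1))$, without deleting anything. Your proposed extremal pieces, $K_{d+1}$ minus an edge, cannot occur at all: in a $d$-regular graph such a piece has only $2$ boundary edges.

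The real obstruction is subgraphs of order much larger than $1/\eta$, where that margin is useless. The bound $e(F')\le (d|V(F')|-d-1)/2$ you propose for components in Step 3 yields the required exponent only when $|V(F')|$ is at most about $1/(d^2\eta)$. Your claim that sets with $|X|>\gamma n$ are ``handled trivially since $n^{-|X|}$ dominates'' is also false. For $F=H'$ itself, any measure on copies gives some copy mass at least $1/n!$. This exceeds $q^{e(H')}$ unless $e(H')\le (d/2-\Omega(\eta))n$. So large and medium sets are controlled only by the deleted edges, never by the vertex count; note that $H$ itself has density $dn/(2(n-1))>d/2$.

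What is missing is a mechanism forcing the $G$-embedded edges to be present at bounded scale everywhere in $H$. This is needed so that every subgraph with more than a constant number of vertices has a constant fraction of vertices whose degree drops. A linear-size set $S$ of pairwise far-apart vertices does not give this: a subgraph with boundary $d+1$ and order between $1/(d^2\eta)$ and $\gamma n$ may contain no vertex of $S$, and then its density is too close to $d/2$.

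The paper handles this as follows:
\begin{itemize}
\item It orients $H$ acyclically along BFS levels of a spanning forest.
\item It picks an independent set $X$ with $|X|\le\gamma n/2$ meeting every $(K+1)$-out-ball.
\item It deletes one arc from each out-star into $X$. Then every vertex of a subgraph $H'$ has, in its out-ball, a vertex of $H'$-degree at most $d-1$.
\item Double counting gives $|V_0|\ge m/(2K')$, hence $d(H')\le d/2-1/(6K')$ for $m\ge 6(K+2)d^{K+2}/\gamma$. The expansion hypothesis covers smaller $m$.
\end{itemize}

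Your plan can be repaired in a simpler, undirected way. Take $S$ \emph{maximal} subject to pairwise distance greater than $r=r(\varepsilon,d)$, so every vertex lies within distance $r$ of $S$. Since components have order at least $\gamma n$, $|S|\le n/(\lfloor r/2\rfloor+1)$, which is small compared with $\varepsilon n$ for $r$ large. Delete one edge at each $s\in S$. A shortest-path argument then shows that every vertex of any $H'$ lies within distance $r$ of a vertex of $H'$-degree at most $d-1$, which gives the uniform density drop. But this covering property and the resulting bound are exactly the key step, and your proposal neither states nor proves them.

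The regularity lemma is unnecessary. Sending the deleted-edge endpoints in $S$ to vertices of $G$-degree at least $\varepsilon n/2$, and their partners to random $G$-neighbours, already gives an $O(1/n)$-vertex-spread measure, as in the paper.
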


It is shown in \cite{chen2024thresholds} that the threshold of such graphs $H$ is $n^{-2/d}$, and in a recent paper, Zhukovskii \cite{zhukovskii2025sharp} determined the sharp threshold for most of such $H$, which is $(1+o(1))(e/n)^{2/d}$.
Therefore, similar to Theorem \ref{maindegenerate}, Theorem \ref{mainthm:regular} shows a saving of a polynomial factor on the perturbation threshold for this family of graphs.

We note that the assumption $|\partial(X)|\ge d+1$ in Theorem~\ref{mainthm:regular} is sharp. Indeed, for $d\ge 3$ and $n\in d \mathbb{N}$, consider the following construction given in \cite{chen2024thresholds}.
Let $H$ be a $d$-regular graph 
obtained by taking $n/d$ disjoint copies of $K_{d}$. 
Partition each clique into sets $A_i,B_i$ of sizes $\lceil d/2\rceil$ and $\lfloor d/2\rfloor$. 
Add a perfect matching  between $A_i$ and $A_{i+1}$ for odd $i\in [n/d-1]$,
and between $B_i$ and $B_{i+1}$ for even $i\in [n/d-1]$. If $n/d$ is even, then add a  perfect matching  between $B_{n/d}$ and $B_{1}$; if $n/d$ is odd, then add one between $A_{n/d}$ and $B_{1}$. (When $d$ is odd,  $n/d$ is even  by the handshaking lemma.)
It is readily checked that $|\partial(X)|\ge d$ for every $X\subseteq V(H)$ with $2\le |X|\le n/2$. However, the perturbation threshold of $H$ is at least that of a $K_{d}$-factor, which by Balogh, Treglown and Wagner \cite{MR3922775} is $n^{-2/d}$, and in constrast, the threshold of $H$ is at most $n^{-2/d}\log n$. Consequently, the saving for such $H$ is at most logarithmic, not polynomial.

We also present two applications of Theorem \ref{mainthm:regular}. 
The first result states that for almost all $d$-regular graphs with $d\ge 4$, the condition in Theorem~\ref{mainthm:regular} is satisfied; thus, a typical $d$-regular graph is much easier to embed in randomly perturbed graphs than the $K_{d+1}$-factor.
\begin{corollary}\label{cor:almost_2dreg}
  Let $d \geq 4$ be a fixed integer. Then there exists a constant $\eta>0$ such that the following holds for sufficiently large $n$.
  For almost all $n$-vertex $d$-regular graphs $H$ and any $n$-vertex graph $G$ with at least $\varepsilon n^2$ edges, w.h.p. $G\cup G(n,n^{-\frac{2}{d}-\eta})$ contains a copy of $H$.
\end{corollary}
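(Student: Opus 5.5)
The corollary follows from Theorem~\ref{mainthm:regular} once we show that, for $d\ge 4$, a uniformly random $n$-vertex $d$-regular graph $H$ w.h.p.\ satisfies
\[
|\partial(X)|\ge d+1 \qquad \text{for every } X\subseteq V(H) \text{ with } 2\le |X|\le \gamma n,
\]
for some absolute $\gamma=\gamma(d)>0$ (for instance $\gamma=1/2$ or a small constant). With this $\gamma$ and the given $\varepsilon$, we take $\eta=\eta(d,\varepsilon,\gamma)$ from Theorem~\ref{mainthm:regular}, which then applies to every such $H$.

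The plan is to work in the configuration (pairing) model. Since $d$ is fixed, the probability of simplicity is bounded below by a constant, so any event that holds w.h.p.\ in the pairing model also holds w.h.p.\ for uniform simple $d$-regular graphs. We write $e(X)$ for the number of edges inside $X$. For $|X|=k$ we have $|\partial(X)|=dk-2e(X)$, so $|\partial(X)|\le d$ is equivalent to $e(X)\ge (dk-d)/2$. The argument splits by the size of $X$.

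\textbf{Small sets, $2\le k\le k_0$ for a large constant $k_0$.} Since $e(X)\le \binom{k}{2}$, the condition requires $(k-1)d/2\le k(k-1)/2$, i.e.\ $k\ge d$. For $k\le k_0$ we use that a random $d$-regular graph w.h.p.\ has no subgraph on $k\le k_0$ vertices with more edges than vertices; indeed the expected number of such subgraphs is $O(n^{k-e})=O(n^{-1})$. The condition $e(X)\ge d(k-1)/2$ forces $e(X)\ge k+1$ once $d(k-1)/2\ge k+1$, which holds for $d\ge 4$ and $k\ge 3$. For $k=2$ we have $|\partial(X)|\ge 2d-2\ge d+1$ since $d\ge 3$. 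Hence small sets are fine. This is exactly where $d\ge 4$ enters: for $d=3$ and $k=3$ the condition reads $e(X)\ge 3=k$, i.e.\ a triangle, which does occur with constant probability.

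\textbf{Medium and large sets, $k_0< k\le \gamma n$.} This is the first-moment union bound that we expect to be the main obstacle, since the estimate must hold uniformly in $k$. In the pairing model, the probability that $X$ spans at least $m=\lceil d(k-1)/2\rceil$ edges is at most
\[
\binom{dk}{2m}\,(2m-1)!!\left(\frac{1}{dn-dk}\right)^{m}\cdot(\text{constant})^{m}\le \left(\frac{Ck}{n}\right)^{m}.
\]
Multiplying by $\binom{n}{k}\le (en/k)^k$ gives a term of order
\[
\left(\frac{en}{k}\right)^{k}\left(\frac{Ck}{n}\right)^{d(k-1)/2}=\left(\frac{C'k}{n}\right)^{(d/2-1)k-d/2}.
\]
For $d\ge 4$ we have $(d/2-1)k-d/2\ge k-2$, which is at least $2$ once $k\ge 4$, so for $k\le \gamma n$ with $\gamma$ small enough that $C'\gamma<1/2$, each term is at most $\max\{(C'k/n)^{2},\,2^{-(k-2)}\}$. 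Summing over $k_0<k\le\gamma n$ gives $o(1)$: the polynomially small range contributes $O(1/n)$, and the exponentially decaying range contributes $o(1)$ once $k_0$ grows slowly, or is bounded by splitting at $k=\log n$. Combining the two ranges yields the edge-expansion property w.h.p., and Theorem~\ref{mainthm:regular} finishes the proof.
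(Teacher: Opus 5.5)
Your proof is correct, but it takes a different route from the paper.

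\textbf{The paper's route.} The paper does no computation in the configuration model. It cites Bollob\'as's theorem on vertex separators in random regular graphs (Theorem~\ref{thm:almostreg}), applied to the partition $X\cup N(X)\cup Z$ with $Z=V(H)\setminus(X\cup N(X))$, and uses $|\partial(X)|\ge |N(X)|$. The separator bounds $2d-3\ge d+1$ (for $a=2$) and $3(d-2)\ge d+2$ (for $a\ge 3$) then give $|\partial(X)|\ge d+1$ for all $2\le |X|\le n/2$, so it can take $\gamma=1/2$. This is short, but it relies on a heavier external result about vertex cuts to obtain a statement about edge cuts.

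\textbf{Your route.} Your first-moment computation works with edge cuts directly and is self-contained. It also shows transparently where $d\ge 4$ enters: the exponent $(d/2-1)k-d/2$ exceeds $k-2$ by $(d/2-2)(k-1)\ge 0$ when $d\ge 4$, whereas for $d=3$ it vanishes at $k=3$. The price is that the union bound only reaches $|X|\le\gamma n$ for a small $\gamma=\gamma(d)$, since you need $C'k/n<1$. So drop the parenthetical ``for instance $\gamma=1/2$''. This costs nothing, because Theorem~\ref{mainthm:regular} accepts any $\gamma\in(0,1/2]$ and $\eta$ may depend on $d$ through $\gamma$.

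\textbf{One slip to fix.} Each term is bounded by the \emph{minimum} of $(C'k/n)^2$ and $2^{-(k-2)}$, not the maximum. In fact each term is at most $(C'k/n)^{k-2}$. Summing the maximum would give $\sum_{k\le\gamma n}(C'k/n)^2=\Theta(n)$, which is useless. Your subsequent split at $k=\log n$ is the right way to finish:
\begin{itemize}
\item the range $k_0<k\le\log n$ contributes at most $\sum_{j\ge 2}(C'\log n/n)^j=O\bigl((\log n/n)^2\bigr)$;
\item the range $\log n<k\le\gamma n$ contributes at most $\sum_{k>\log n}2^{-(k-2)}=o(1)$.
\end{itemize}
Alternatively, let $k_0\to\infty$ slowly and use only the geometric tail.
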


This follows from \cite[Theorem 7.32]{Bollobas2001random}, which implies that w.h.p. 
a random $d$-regular graph has an edge-cut of size at most $d$ only when one side of the cut is a singleton. We prove this formally in Section \ref{sec:cor}.
We also note that when $d=3$, with probability $\Omega(1)$, a random $3$-regular graph contains a triangle, and thus there exists a set $X$ of three vertices with $|\partial(X)|=3$.


Another notable family of graphs satisfying the condition in Theorem~\ref{mainthm:regular} is the $d$-th power of a Hamilton cycle. The \emph{$d$-th power of a Hamilton cycle} is the graph obtained from a Hamilton cycle $C_n$	
  by adding an edge between every pair of vertices whose distance along $C_n$ 	
  is at most $d$.
B\"ottcher, Montgomery, Parczyk, and Person~\cite{MR4130332} proved that for every $d \geq 2$ and $\alpha >0$, there exists a constant $\eta=\eta(\alpha, d)$ such that w.h.p. $G_\alpha \cup G(n, n^{-1/d-\eta})$ contains the $d$-th power of a Hamilton cycle.
The minimum degree condition on $G_\alpha$ was recently relaxed to a density condition by the authors~\cite{han2025perturbation}.
Theorem~\ref{mainthm:regular} recovers both results.
\begin{corollary}\label{cor:Hcyclepower}
  For every integer $d\geq2$ and constant $\varepsilon>0$, there exists a constant $\eta>0$ such that the following holds. 
  For any $n$-vertex graph $G$ with at least $\varepsilon n^2$ edges, w.h.p. $G\cup G(n,n^{-\frac{1}{d}-\eta})$ contains the $d$-th power of a Hamilton cycle.
\end{corollary}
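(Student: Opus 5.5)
The plan is to derive Corollary~\ref{cor:Hcyclepower} from Theorem~\ref{mainthm:regular}, applied with $d' := 2d$ in place of $d$. The $d$-th power of a Hamilton cycle $C_n^d$ is $2d$-regular for $n \ge 2d+1$, and $2d \ge 4 \ge 3$. The threshold in Theorem~\ref{mainthm:regular} then reads $n^{-2/(2d)-\eta} = n^{-1/d-\eta}$, which is exactly the threshold claimed in the corollary. So it remains to verify the edge-expansion hypothesis: we need $|\partial(X)| \ge 2d+1$ for every $X \subseteq V(C_n^d)$ with $2 \le |X| \le \gamma n$, for some fixed $\gamma$. We take $\gamma = 1/2$, with $n$ large.

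To verify this, identify the vertices with $\mathbb{Z}_n$ and split $X$ into maximal cyclic intervals (runs) $I_1,\dots,I_k$, separated by nonempty gaps of $V\setminus X$.

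\textbf{Case $k=1$.} Here $X$ is a single interval of length $m$ with $2 \le m \le n/2$. Count the edges from $X$ to the $d$ vertices immediately to its right. The $j$-th vertex to the right, for $j=1,\dots,d$, is adjacent to the last $\min(m,d-j+1)$ vertices of $X$. Summing gives $\sum_{j=1}^d \min(m,d+1-j)$, and symmetrically on the left. Since $m \ge 2$, the $j=1$ term is at least $2$ and every other term is at least $1$, so each side contributes at least $d+1$. Because $m \le n/2$ and $n$ is large, the left and right neighbourhoods are disjoint, so $|\partial(X)| \ge 2d+2 \ge 2d+1$.

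\textbf{Case $k \ge 2$.} Look at each gap $J$ between consecutive runs. If $|J| \ge d$, the edges crossing the two ends of $J$ alone contribute at least $d$ on each side by the single-boundary count above. If $|J| < d$, every vertex of $J$ has a neighbour in $X$ on both sides. More precisely, the edges between $J$ and the two runs adjacent to $J$ number at least $2|J| \ge 2$, and at least $d$ whenever the adjacent runs are long. Summing over the at least two gaps (each edge of $\partial(X)$ has its outside endpoint in exactly one gap, so nothing is double counted) should give at least $2d+1$. A clean way to organise this is via the boundary of a single cut position: for a cut between consecutive vertices $i,i+1$, the $\binom{d+1}{2}$ edges of $C_n^d$ spanning that cut (pairs $u\le i<v$ with $v-u\le d$) can be charged. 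Every boundary point of a run at which $X$ changes from in to out, or out to in, forces at least $d$ crossing edges among the pairs straddling it. With at least $4$ such change points, grouping them appropriately gives at least $2d+1$ edges.

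\textbf{Main obstacle.} The real work is this counting for runs and gaps that are short compared with $d$, where the neighbourhoods of different change points overlap. A uniform way to handle it is to charge each boundary edge to the position of the first change point it crosses and show that each of the $2k \ge 4$ change points receives at least $\lceil d/2 \rceil$ edges, with strictness somewhere. Alternatively, and most simply, use the min-cut/Menger argument: $C_n^d$ contains $2d$ edge-disjoint paths between any two vertices. Any set $X$ with $|X| \ge 2$ and $|V\setminus X| \ge 2$ can only have a cut of size exactly $2d$ if it is a single vertex's star. This is because the cut of size $2d$ realised at a vertex uses all $d$ "jump lengths" in each direction, and a non-singleton set cannot realise all of them with multiplicity one. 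I would try this last argument first, checking it directly on the jump-length classes: edges of length $\ell$ form a union of cycles, and each such cycle meets $\partial(X)$ in an even number, at least $2$, of edges whenever it meets both $X$ and its complement. This gives at least $2$ edges per length, so at least $2d$. Strictness then follows because $|X| \ge 2$ forces some length class to cross at least $4$ times or to have more than one cycle crossing.
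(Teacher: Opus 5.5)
Your reduction matches the paper's: $C_n^d$ is $2d$-regular with $2d\ge 4$, so Theorem~\ref{mainthm:regular} applied with $2d$ in place of $d$ gives exactly the threshold $n^{-1/d-\eta}$. Everything then rests on showing $|\partial(X)|\ge 2d+1$, and your single-interval case ($k=1$) is correct.

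\textbf{The gap is the case $k\ge 2$, which you never close.} Your per-gap bounds are at least $2d$ from a gap of length at least $d$ and at least $2$ from a shorter gap. On their own these do not sum to $2d+1$ when all gaps are short. Your other sketches do not fill this in:
\begin{itemize}
\item The charging scheme is unproved.
\item The claim that a cut of size $2d$ must isolate a vertex just restates what has to be shown.
\item The jump-length argument, which you say you would try first, has a false step under your choice $\gamma=1/2$. The claim ``at least $2$ crossing edges per length'' fails because the length-$\ell$ edges split into $\gcd(\ell,n)$ cycles, and $X$ may be a union of whole such cycles. For example, take $n$ even, $X$ the even vertices and $\ell=2$; then no length-$2$ edge lies in $\partial(X)$.
\end{itemize}
The difficulty is also misdiagnosed. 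What is missing is not delicate local counting with overlapping neighbourhoods, but the global fact that $V\setminus X$ is large.

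\textbf{Either of two short repairs works.}
\begin{itemize}
\item Keep $\gamma=1/2$ and finish the gap count. If some gap has length at least $d$, it contributes at least $2d$ edges, and a second gap contributes at least $2$ more. If every gap has length less than $d$, then since the gaps have total length $n-|X|\ge n/2$, there are at least $n/(2d)$ of them. Each contributes at least $2$ edges, so $|\partial(X)|\ge n/d\ge 2d+1$.
\item Take $\gamma<1/d$. Every length-$\ell$ cycle then has length $n/\gcd(\ell,n)\ge n/d>|X|$, so your bound of $2$ per length class becomes valid. For $k\ge 2$ the length-$1$ class already crosses $2k\ge 4$ times, which gives $|\partial(X)|\ge 2d+2$.
\end{itemize}

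\textbf{The paper's route avoids runs altogether.} It takes $\gamma\ll 1/d$, so $X$ misses $d$ consecutive vertices and $H[X]$ lies in the $d$-th power of a path. The first, second and last vertices of $X$ along that path have at most $d$, $d+1$ and $d$ neighbours in $X$ respectively. Together they contribute at least $3d-1\ge 2d+1$ boundary edges.
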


One common feature of Theorems~\ref{maindegenerate} and \ref{mainthm:regular} is that both the perturbation thresholds enjoy a saving of a polynomial factor in $n$, in contrast to most other results on perturbation thresholds where the saving is a poly-logarithmic factor.
We now present the main technical theorem that we use to derive both Theorems~\ref{maindegenerate} and \ref{mainthm:regular}.

Before stating our technical theorem, we introduce some notation.
For a digraph $D$ and a vertex $v \in V(D)$, let $N^+_{D}(v)=\{u\in V(D)\setminus\{v\}:(v,u) \text{ is an arc in } D\}$ be the \emph{out-neighborhood} of $v$ in $D$.
This concept extends to any vertex set $W\subseteq V(D)$ by defining $N^+(W)=\bigcup_{w\in W} N^+(w)\setminus W$. 
For a vertex $v\in V(D)$, the \emph{$p$-out-ball} $B_{D}^{+p}(v)$ of $v$ is the set of vertices reachable from $v$ by a directed path of length at most $p$, including $v$ itself.
We omit the subscript $D$ when it is clear from the context.
Given a graph $F$, let $v_F$ and $e_F$ denote the number of vertices and edges of $F$, respectively.
If $F$ has at least two vertices, its 1-\emph{density} is defined as $m_1(F):=\max\left\{ d(F'):F'\subseteq F, v_{F'}\geq 2 \right\}, \text{ where } d(F'):=\frac{e_{F'}}{v_{F'}-1}.$ 

We use $\ll$ to denote a hierarchy between constants.
If we write that a statement holds whenever $0 < a \ll b, c \ll d$, then it means that there exist non-decreasing functions $g_1, g_2 \colon (0,1] \to (0,1]$ and $f \colon (0,1]^2 \to (0,1]$ such that the statement holds for all $a, b, c, d$ satisfying $b \le g_1(d)$, $c \le g_2(d)$, and $a \le f(b, c)$.
We will not explicitly compute these functions to avoid cluttering the presentation of the proofs.
With these definitions, we can now state our main technical result.

\begin{theorem}
\label{combin}
Let $d>1$ be a real number. Suppose $1/n \ll \eta \ll \varepsilon' \ll 1/K\ll\varepsilon,1/d,1/\Delta$.
 Let $H$ be an $n$-vertex graph with $\Delta(H)\leq\Delta$ and let $D$ be an acyclic orientation of $H$.
 Let $V':=\{v\in V(H):|B_{D}^{+K}(v)|\geq K/2\}$. Suppose that every subgraph $H'$ of $H$ of order $m$ satisfies the following:
 \begin{itemize}
   \item[(1)] If $m>K/2$, then $d(H')\leq d-\varepsilon'$ or there exists $v\in V(H')\cap V'$ with $E(D[B^{+(K+1)}(v)])\subseteq E(H')$.
   \item[(2)] If $m\leq K/2$, then $e_{H'}\leq d(m-1)-1/2$.
 \end{itemize}
 If $G$ is an $n$-vertex graph with at least $\varepsilon n^2$ edges and $p=n^{-\frac{1}{d}-\eta}$, then w.h.p. $G\cup G(n,p)$ contains a copy of $H$.
\end{theorem}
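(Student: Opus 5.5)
We plan a two-phase embedding. In the first phase we use the dense graph $G$ to embed a small number of carefully chosen ``rooted balls'' of $H$. In the second phase we embed the remainder of $H$ in the random graph, using a spread/fractional-expectation-threshold style argument in the spirit of Frankston--Kahn--Narayanan--Park; the pre-embedded structure is exactly what lowers the required density from $n^{-1/d}$ to $n^{-1/d-\eta}$.

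\emph{Step 1 (regularity and a dense bipartite piece of $G$).} Apply the regularity lemma to $G$. Since $e(G)\ge \varepsilon n^2$, we obtain two disjoint sets $U_1,U_2$ of linear size (depending on $\varepsilon$) with $G[U_1,U_2]$ $\varepsilon'$-regular of density $\Omega(\varepsilon)$. Every edge we will take from $G$ lies in this pair, and every vertex of $H$ meant to be placed there will use a $G$-edge as one of its adjacencies.

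\emph{Step 2 (choosing which edges of $H$ to take from $G$).} Because $D$ is acyclic with out-degrees bounded by $\Delta$, greedily pick a family $\mathcal{B}$ of pairwise far-apart vertices $v\in V'$, of size $\Theta(\varepsilon' n)$ or just at least $n^{1-\eta'}$. For each $v\in\mathcal{B}$ declare one edge of $D[B^{+(K+1)}(v)]$, say an out-arc of $v$, to be a ``$G$-edge''. Let $H^\ast$ be $H$ with these edges deleted. Embed the $G$-edges as a matching inside $G[U_1,U_2]$ using regularity, keeping enough randomness (a random choice among many options) for the later spreadness argument.

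\emph{Step 3 (density check for $H^\ast$ and the spread measure).} The key claim is that every subgraph $F\subseteq H^\ast$ satisfies $e_F\le (d-\varepsilon'')(v_F-1)$, up to an additive constant for $|F|\le K/2$. Condition (2) handles small $F$ directly, because it gives an absolute saving of $1/2$. For $m>K/2$, condition (1) gives either density at most $d-\varepsilon'$, or some $v\in V'$ whose whole $(K+1)$-ball lies inside $F$. In the second case the ball contains at least $K/2$ vertices, so at bounded distance it contains some vertex of $\mathcal{B}$ whose $G$-edge was removed. Counting how many removed edges $F$ must miss, roughly one per $K$ vertices, yields the needed saving of order $1/K\ge \varepsilon''$. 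We then conclude via the Park--Pham/FKNP machinery: a uniformly random embedding of $H^\ast$ compatible with the pre-placed matching is $q$-spread with $q=O(n^{-1/(d-\varepsilon'')})$, so $p=n^{-1/d-\eta}\gg q\log n$ suffices once $\eta\ll\varepsilon''$.

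\emph{Main obstacle.} The delicate point is Step 3 for large subgraphs that are dense. Condition (1) only provides a single ball, yet the saving has to be proportional to $v_F$. We intend to handle this with an iterative peeling argument. Repeatedly apply (1) to $F$ minus the balls already found. Each application either certifies low density for the rest, or exhibits a new ball inside $F$ containing a deleted edge. Balls have size at most $\Delta^{K+2}$, and the removed-edge vertices are spread out, so we collect about $v_F/\Delta^{K+2}$ deleted edges. This gives a density deficit of order $\Delta^{-K-2}$, which dominates $\eta$ by the constant hierarchy. A secondary difficulty is making the spread argument coexist with the deterministic $G$-part. For this we would embed the $G$-matching by a random choice in the regular pair, so that the whole joint measure remains spread.
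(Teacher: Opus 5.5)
Your overall two-phase plan (place a few edges of $H$ in $G$, embed the rest via a spread measure and FKNP) matches the paper. The gap is in Steps 2--3: the argument needs a \emph{hitting} family, not a packing. For every $v\in V'$, the arc set of $D[B^{+(K+1)}(v)]$ must contain a deleted edge, and your pairwise far-apart family of size $n^{1-\eta'}$ or $\Theta(\varepsilon' n)$ does not have this property. The sentence ``the ball contains at least $K/2$ vertices, so at bounded distance it contains some vertex of $\mathcal{B}$'' is unjustified: a packing gives no covering, and a chosen vertex that is close in undirected distance need not have its deleted arc inside the directed out-ball.

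Concretely, the $d$-th power of a Hamilton path with the degeneracy orientation satisfies (1) and (2), as in the proof of Theorem~\ref{maindegenerate}. For this $H$, any family of size $\Theta(\varepsilon' n)$ with $\varepsilon'\ll 1/K$ leaves index windows of length much larger than $2d(K+1)$ containing no vertex of $\mathcal{B}$. Out-balls of vertices in the middle of such windows contain no deleted edge. With only $n^{1-\eta'}$ deletions it is worse: the expected number of embeddings whose non-deleted edges all lie in $G(n,p)$ is at most $n!\,p^{dn-o(n)}\to 0$. So the deleted set must have linear size.

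Your peeling argument in ``Main obstacle'' cannot repair this, and it is internally inconsistent. If $F\subseteq H^\ast$ and (1) returns $v$ with $E(D[B^{+(K+1)}(v)])\subseteq E(F)$, then that ball contains no deleted edge at all. More fundamentally, (1) is a disjunction and gives no density bound whatsoever for subgraphs containing a full ball, so there is no baseline from which to subtract a ``saving''. Peeling one edge per returned ball only yields $e_F\le(d-\varepsilon')(v_F-1)+k$, which is an additive loss.

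The right design makes this case vacuous. Once every $D[B^{+(K+1)}(v)]$, $v\in V'$, contains a deleted edge, no $F\subseteq H^\ast$ with $v_F>K/2$ can contain a full ball, so (1) gives $d(F)\le d-\varepsilon'$ directly. For $v_F\le K/2$, (2) gives $d(F)\le d-\frac{1}{2(v_F-1)}\le d-\varepsilon'$. The missing work is to build such a hitting family of size $O(\gamma n)$, with $1/K\ll\gamma\ll\varepsilon$, whose edges can be placed in $G$ in a spread way. The paper does this as follows.
\begin{itemize}
\item It takes a random set of density $\gamma/4$ with $K=10\gamma^{-1}\ln(20/\gamma)$. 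Only an $e^{-\gamma K/10}$-fraction of balls are then missed, and these are patched by adding one vertex each.
\item It makes the resulting set $X$ independent by repeatedly deleting in-neighbours of sinks of $D[X_i]$. This keeps every ball hit within radius $K+1$.
\item It deletes one arc from each vertex of $N^-(X)$ into $X$. These arcs form vertex-disjoint in-stars centred in $X$, not a matching.
\item It embeds these stars by sending $X$ to vertices of $G$-degree at least $\varepsilon n/2$ and the leaves into their $G$-neighbourhoods. This gives a $4/(\varepsilon n)$-vertex-spread measure, after which Kelly--M\"uyesser--Pokrovskiy and FKNP finish the proof.
\end{itemize}
No regularity lemma is needed.
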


For an $n$-vertex $d$-degenerate graph $H$, we have $m_1(H)\le d-o_n(1)$, and its threshold (in $G(n,p)$) is equal to $n^{-{1}/{d}} \asymp n^{-{1}/{m_1(H)}}$.
Indeed, the celebrated result of Frankston, Kahn, Narayanan and Park~\cite{MR4298747} indeed gives that the threshold of any graph $H$ is at most $n^{-{1}/{m_1(H)}}\log n$, which makes $n^{-{1}/{m_1(H)}}$ a natural target for the $H$-containment property for many graphs $H$.

For the randomly perturbed model, given that $\Delta(H)\le \Delta$, it is clear that the deterministic graph $G$ with density $\eps$ (or minimum degree $\eps n$) may contribute at most $o(n)$ edges to a copy of $H$ in $G\cup G(n,p)$.
Therefore, one must embed a subgraph $H^*$ of $H$ to $G(n,p)$ with $e_{H^*}= e(H) - o(n)$.
It implies that the perturbation threshold of $H$ is upper bounded by the threshold of $H^*$, which in turn is upper bounded by $n^{-1/m_1(H^*)}$.
Thus, roughly speaking, to have a polynomial-saving on the perturbation threshold of $H$, we need to have
\[
m_1(H^*) \le m_1(H) - \eps
\]
for some absolute $\eps >0$.
This suggests a proof strategy: in the proof, we would like to choose $H^*$ (equivalently, to choose the edges to be covered in $G$) so that every induced subgraph of $H^*$ has density at most $m_1(H) - \eps$.
For large subgraphs, this is achieved by removing some edges of $H$ from it; for small subgraphs (of constant order), as we only remove $o(n)$ edges from $H$, they must have density at most $m_1(H) - \eps$ in $H$ by themselves. 

The assumptions (1) and (2) in Theorem~\ref{combin} provide a sufficient condition: if i) every large subgraph of $H$ must either have a lower density or completely contain an (out)-$K$-ball, ii) every small subgraph has a strictly lower density, then the polynomial-saving on the perturbation threshold is guaranteed.
In view of the above discussion, (2) is essentially necessary while it is not clear to us whether (1) is necessary for the polynomial-saving. 

The rest of this paper is structured as follows.
Section \ref{sec:pre} introduces our notation for digraphs and preliminary results on spreadness. Section \ref{sec:proffram} presents the proof of our main framework, Theorem \ref{combin}. The applications of this framework are then developed in Section \ref{sec:profdegene}, where we prove Theorem \ref{maindegenerate} for degenerate graphs and Theorem \ref{mainthm:regular} for regular graphs. Corollaries~\ref{cor:almost_2dreg} and \ref{cor:Hcyclepower}, which follow from Theorem \ref{mainthm:regular}, are stated in Section \ref{sec:cor}.

\section{Notation and Preliminaries}\label{sec:pre}
\subsection{Basic notation}
Given a graph $F$, let $v_F$ and $e_F$ denote the number of vertices and edges of $F$, respectively.
The \emph{edge cut} of $F$ associated with $X \subseteq V(F)$, denoted by $\partial(X)$, is the set of edges with one end in $X$ and the other in $V(F) \setminus X$. 
For two subsets $X,Y\subseteq V(F)$, we define $E(X)$  as  the set of edges with both ends in $X$ and $E(X,Y)$ as the set of edges with one end in $X$ and the other  in $Y$.

A digraph $D$ consists of a non-empty finite set $V(D)$ of elements called \emph{vertices} and a finite set $A(D)$ of ordered pairs of distinct vertices called \emph{arcs}.
The \emph{order}(\emph{size}) of $D$ is the number of vertices (arcs) in $D$.
The order of $D$ will sometimes be denoted by $|D|$. Denote the edge set of the underlying graph of $D$ by $E(D)$.
For a subset $S\subseteq A(D)$, let $D\setminus S$ denote the digraph obtained from $D$  by deleting all arcs in $S$.
Denote by $D[S]$  the \emph{arc-induced subdigraph} with arc set $S$ and vertex set consisting of all vertices incident with arcs in $S$. 
For a vertex $v$ in $D$, $N^+_{D}(v)=\{u\in V(D)\setminus\{v\}:(v,u)\in A(D)\}$,
$N^-_{D}(v)=\{w\in V(D)\setminus\{v\}:(w,v)\in A(D)\}$.
The sets $N^+_{D}(v)$, $N^-_{D}(v)$ are called the \emph{out-neighborhood}, \emph{in-neighborhood} of $v$, respectively.
We call the vertices in $N^+_{D}(v)$, $N^-_{D}(v)$ the \emph{out-neighbors}, \emph{in-neighbors} of $v$.
For any subset $X$ of $V(D)$, let $N_D^+(v,X)$ ($N_D^-(v,X)$) denote the out-neighbors (in-neighbors) of $v$ in $X$.
For a set $W\subseteq V(D)$, let $N^+_{D}(W)=\bigcup_{w\in W}N^+_{D}(w)\setminus W$ 
and $N^-_{D}(W)=\bigcup_{w\in W}N^-_{D}(w)\setminus W$.
A \emph{walk} in $D$ is an alternating sequence $W=x_1a_1x_2a_2x_3\cdots x_{k-1}a_{k-1}x_k$ of vertices $x_i$ and arcs $a_j$ from $D$ such that $a_i=(x_i,x_{i+1})$.
If the vertices and arcs are distinct, then $W$ is a \emph{path}.
Furthermore, if $x_1=x_k$, then $W$ is a \emph{cycle}.
The \emph{length} of a walk is the number of its arcs.
The \emph{distance} between $x$ and $y$, denoted by dist$(x,y)$, is the minimum length of an $(x,y)$-walk.
A digraph is called \emph{acyclic} if it has no cycle.


An \emph{oriented} graph is a digraph with no cycles of length two.
A digraph $D$ is called an \emph{orientation} of graph $H$ if it is obtained from $H$ by replacing each edge $\{x,y\}$ of $H$ by $(x,y)$ or $(y,x)$.
A set $Q$ of vertices in a digraph $D$ is \emph{independent} if $A(D[Q])=\emptyset$.



\subsection{Spreadness}

The following notion of spreadness has played a critical role in the recent celebrated results on the fractional expectation thresholds by Frankston, Kahn, Narayanan and Park \cite{MR4298747}.
%
A \emph{hypergraph} $H$ consists of a vertex set $V(H)$ and edge set $E(H)\subseteq 2^{V(H)}$.

\begin{definition}[Spread]
Let $q\in[0,1]$ and $r\in \mathbb{N}$. Assume that $H$ is a hypergraph on the vertex set $V$ and $\mu$ is a probability measure on the edge set of $H$. We say that $\mu$ is $q$-$spread$ if for every $S\subseteq V$, the following holds:
\[
\mu(\{A\in E(H):S\subseteq A\})\leq q^{|S|}.
\]
\end{definition}

 Frankston, Kahn, Narayanan and Park \cite{MR4298747} established a connection between spreadness and the threshold in random graphs.
 We use $V_p$ to denote a subset of $V$ where each $x\in V$ is included independently with probability $p$.
 \begin{proposition}
 \label{FKNP}{\rm(\cite{MR4298747}, Theorem 1.6)}
Let $\mathcal{H}$ be an $r$-bounded hypergraph on vertex set $V$ that supports a $q$-spread distribution.
If $p \geq Kq \log r$, then with probability $1-o_r(1)$, the random subset $V_p$ contains an edge of $\mathcal{H}$.
 \end{proposition}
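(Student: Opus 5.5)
The plan is to follow the iterative ``fragment'' (or ``minimal-cover'') argument of Frankston, Kahn, Narayanan and Park. Since $V_p$ can be coupled as a union of independent random subsets, we write $V_p\supseteq W_1\cup\cdots\cup W_\ell$. Here $\ell=O(\log r)$, each $W_i$ is a uniformly random subset of $V$ of size $w_i\approx L q|V|$, and $L$ is a large absolute constant. The condition $p\ge Kq\log r$ (with $K$ large) is exactly what makes $\sum_i w_i\le p|V|/2$, so w.h.p.\ this coupling is available. The goal is to show that, with probability $1-o_r(1)$, some edge of $\mathcal H$ lies inside $W_1\cup\dots\cup W_\ell$.

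\textbf{Key step (one round).} Let $\mathcal H$ be $r$-bounded with a $q$-spread measure $\mu$, and let $W$ be a uniform set of size $w=Lq|V|$. For an edge $A$ we fix canonically an edge $A'\subseteq A\cup W$ minimising $|A'\setminus W|$, and we define the \emph{fragment} $T(A,W)=A'\setminus W\subseteq A$. We claim that
\[
\mathbb E_W\,\mu\bigl(\{A:|T(A,W)|\ge t\}\bigr)\le \binom{r}{t}\Bigl(\tfrac{C}{L}\Bigr)^{t}.
\]
To prove this, we count pairs $(W,A)$ with $|T|=t$ through the set $Z=W\cup T$, which has size $w+t$. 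Given $Z$, the canonical choice makes the set $A'$ determined by $Z$ alone; this is the point where minimality is used, since $A'$ is a minimum-defect edge inside $Z\cup A$, and one has to check it can be recovered from $Z$. Next, $T$ is one of at most $\binom{r}{t}$ subsets of $A'$. Finally, the $\mu$-weight of edges $A\supseteq T$ is at most $q^t$ by spreadness. Since $\binom{|V|}{w+t}/\binom{|V|}{w}\le(|V|/w)^t=(Lq)^{-t}$, multiplying these factors gives the claimed bound.

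Summing over $t\ge r/2$ with $L\ge 64$ shows that, except with probability $2^{-\Omega(r)}$ (by Markov), all but a small $\mu$-fraction of edges have fragment of size at most $r/2$. The fragments form a new $(r/2)$-bounded hypergraph. The pushforward of $\mu$, restricted to good $A$ and renormalised, is still $(1+o(1))q$-spread: fragments containing $S$ come from edges $A\supseteq S$, so their weight is at most $q^{|S|}$. Moreover, if some fragment is contained in $W_2\cup\cdots$, then the corresponding $A'$ lies in $W_1\cup W_2\cup\cdots$. We iterate, halving $r$ in each round; after $\log_2 r$ rounds the fragments are empty or of size $O(1)$, and an empty fragment means an edge of $\mathcal H$ lies inside the union of the $W_i$.

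\textbf{Main obstacle.} The difficulty is the tail of the iteration. When $r_i$ is small, the failure bound $2^{-\Omega(r_i)}$ is no longer $o_r(1)$, so these per-round errors cannot simply be summed. I would handle this in two ways. First, I would let the round densities grow in the late rounds, taking $w_i=L_i q|V|$ with $L_i$ increasing as $r_i$ shrinks; this is affordable because $\sum_i L_i=O(\log r)$ still fits within $K\log r$. Second, instead of requiring the fragment size to halve, I would only track the expected $\mu$-mass of bad edges, which accumulates to at most $\sum_i\binom{r_i}{\cdot}L_i^{-\Omega(r_i)}=o_r(1)$. In the final rounds, where the fragments have constant size, the spread bound with the larger $L_i$ makes the probability that a fragment survives a whole round tend to $0$. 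Keeping track of the renormalisation constants, which must remain $1+o(1)$ over all rounds, is the delicate bookkeeping.
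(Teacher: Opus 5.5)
The paper does not prove this proposition; it quotes it from Frankston--Kahn--Narayanan--Park, and your sketch is a correct outline of their minimal-fragment argument (the recovery of $A'$ from $Z=W\cup T$ works because, with ties broken by a fixed global order on $\mathcal H$, $A'$ is the first edge of $\mathcal H$ contained in $Z$). Your tail fix with growing $L_i$ is sound, and you can skip the renormalisation bookkeeping by keeping sub-probability measures: the restricted pushforward still satisfies $\nu(\{F\supseteq S\})\le q^{|S|}$, so the expected total lost mass is at most $\sum_i\sum_{t>r_{i+1}}\binom{r_i}{t}L_i^{-t}=o_r(1)$.
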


Pham, Sah, Sawhney and Simkin \cite{pham2023toolkit} introduced a notion of vertex-spreadness, and 
Kelly, M\"{u}yesser and Pokrovskiy
\cite{KELLY2024507} put it in  a general setting.
\begin{definition}[Vertex-spread]
Let $X$ and $Y$ be finite sets and let $\mu$ be a probability distribution over injections $\psi:X\rightarrow Y$.
For $q\in[0,1]$, we say that $\mu$ is a $q$-$vertex$-$spread$ if for every $s\leq |X|$ and every two sequences of distinct vertices $x_1,\ldots,x_s\in X$ and $y_1,\ldots,y_s\in Y$,
\[
\mu(\{\varphi:\varphi(x_i)=y_i{\rm \ for\ all\ } i\in[s]\})\leq q^s.
\]
\end{definition}
A \emph{hypergraph embedding} $\psi:G\rightarrow H$ of a hypergraph $G$ into a hypergraph $H$ is an injective map $\psi:V(G)\rightarrow V(H)$ that maps edges of $G$ to edges of $H$, so there is an embedding of $G$ into $H$ if and only if $H$ contains a subgraph isomorphic to $G$.
Note that when  $H$ is a complete hypergraph, and $G$ and $H$ have the same vertex set, the uniformly random embedding $\psi:G\rightarrow H$ is a permutation of $V(H)$, which is $e/v_H$-vertex-spread (by Stirling approximation).
The following result allows us to connect spreadness and vertex-spreadness.
\begin{proposition}\label{kelly}
{\rm (\cite{KELLY2024507}, Proposition 1.17)}
For every $k,\Delta\in  \mathbb{N}$ and $C>0$, there exists a constant $C_{\ref{kelly}}>0$ such that the following holds for sufficiently large $n$.
Let $H$ and $G$ be $n$-vertex $k$-graphs.
If there is a $(C/n)$-vertex-spread distribution on embeddings $G\rightarrow H$ and $\Delta(G)\leq\Delta$, then there is a $(C_{\ref{kelly}}/n^{1/m_1(G)})$-spread distribution on subgraphs of $H$ which are isomorphic to $G$.
\end{proposition}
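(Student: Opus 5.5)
We have to prove Proposition~\ref{kelly}, the Kelly--Müyesser--Pokrovskiy transfer from vertex-spread embeddings to spread distributions on copies of $G$ in $H$. The plan is to push the vertex-spread measure on embeddings forward to a measure on edge sets and bound co-containment directly.

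\textbf{Setting up the measure.} Let $\mu$ be the $(C/n)$-vertex-spread distribution on embeddings $\psi\colon G\to H$. Let $\nu$ be the pushforward of $\mu$ to subgraphs $\psi(G)\subseteq H$, viewed as edge sets of $H$. We must show that for every set $S$ of edges of $H$,
\[
\nu(S\subseteq \psi(G))\le \left(C_{\ref{kelly}}n^{-1/m_1(G)}\right)^{|S|}.
\]

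\textbf{Union bound over preimages.} Fix $S$ and let $U$ be the set of vertices it spans. The event $S\subseteq\psi(G)$ means there is a set $T$ of $|S|$ edges of $G$ mapped bijectively onto $S$. Consider the subgraph $F=G[T]$, spanned by $T$, with components $F_1,\dots,F_c$. Let $F_i$ have $v_i$ vertices and $e_i$ edges.

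To bound the number of choices of $T$ together with the induced vertex correspondence, we do three things:
\begin{itemize}
\item In each component $F_i$, choose a root vertex of $G$: at most $n$ choices. Choose its image in $U$: at most $|U|\le 2|S|$ choices.
\item Since $\Delta(G)\le\Delta$ and $F_i$ is connected, the rest of $F_i$ is determined, given the root, by at most $(C')^{e_i}$ choices, for instance by exploring a spanning tree and then picking the extra edges.
\item The images of the remaining vertices are forced once we record the matching of edges of $T$ to edges of $S$ up to $(C'')^{e_i}$ choices.
\end{itemize}
For each such configuration, $\psi$ fixes $|V(F)|=\sum v_i$ vertices, so vertex-spreadness gives probability at most $(C/n)^{\sum v_i}$.

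\textbf{The key counting step.} Summing over configurations gives a bound of the form
\[
\nu(S\subseteq\psi(G))\le \sum \prod_i n\,(C_0)^{e_i}\,(C/n)^{v_i}=\sum\prod_i C_0^{e_i}C^{v_i}\,n^{-(v_i-1)}.
\]
The sum runs over partitions of $S$ into the edge sets of the components, and there are at most $2^{O(|S|)}$ of these.

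By definition of $m_1$, each component satisfies $e_i\le m_1(G)(v_i-1)$, so $n^{-(v_i-1)}\le n^{-e_i/m_1(G)}$. Since $\sum e_i=|S|$, the total is at most $(C_{\ref{kelly}}n^{-1/m_1(G)})^{|S|}$ once $C_{\ref{kelly}}$ is large in terms of $C$ and $\Delta$. Note that $v_i\le 2e_i$, so the factors $C^{v_i}$ are absorbed into the constant.

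\textbf{Main obstacle.} The main obstacle is controlling the enumeration of preimage configurations so that the count is exponential only in $|S|$, and not in $|U|$ or $n$. The danger is that the factor $|U|^c$ from choosing root images could be superexponential when $S$ has many small components.

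To handle this, I would process the components of $S$ in $H$ instead. Each component of $S$ lifts to a union of components of $F$. So I would choose, for each component of $F$, an anchor edge of $S$ inside a fixed component of $S$, at most $|S|$ choices per component. Together with the degree bound on $G$ and the edge-by-edge exploration, this gives $2^{O(|S|)}$ choices times $n^{c}$ for the roots. I expect this bookkeeping to be routine but fiddly, and the combination of the vertex-spread bound with $m_1$ to be the conceptual core.
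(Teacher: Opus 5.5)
Your skeleton (union bound over preimage configurations, vertex-spreadness applied to the $|V(F)|$ pinned vertices, and $e_i\le m_1(G)(v_i-1)$ for each component) is the right one. The paper only cites this result from Kelly--M\"uyesser--Pokrovskiy, whose argument is of this type. However, there is a genuine gap exactly at the step you flag as the main obstacle, and your proposed fix does not close it.

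\textbf{The gap.} Choosing an anchor edge of $S$ for each component of $F$, with at most $|S|$ choices each, still produces a factor $|S|^c$. This is not $2^{O(|S|)}$ when $c=\Theta(|S|)$. For instance, let $G$ be a perfect matching (so $m_1(G)=1$) and $S$ a matching of $s$ edges. Your count gives $n^s s^s O(1)^s (C/n)^{2s}=(O(s)/n)^s$, which for $s=\Theta(n)$ is not even below $1$, let alone $(C_{\ref{kelly}}/n)^s$.

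Some surrounding claims are also off:
\begin{itemize}
\item Since $\psi$ restricted to $V(F)$ is an isomorphism $F\to S$, each component of $S$ lifts to exactly one component of $F$, not a union. So the partition of $S$ is forced. (General set partitions are superexponentially many, not $2^{O(|S|)}$.)
\item Your step matching edges of $T$ to $S$ with $(C'')^{e_i}$ choices tacitly needs $\Delta(S)$ bounded. You did not justify this, and $H$ is arbitrary.
\end{itemize}

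\textbf{The missing idea.} Make no choices on the $H$ side at all. The sets $S$, $U$ and the components $S_1,\dots,S_c$ of $S$ are fixed, so enumerate the inverse map $\phi=\psi^{-1}|_U$ directly. This $\phi$ is an embedding of $S$ into $G$, and $\mathbb{P}[\psi^{-1}|_U=\phi]\le (C/n)^{|U|}$ by vertex-spreadness.

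In each $S_j$, fix a root $u_j$ and an ordering of $V(S_j)$ in which every later vertex shares an edge of $S_j$ with an earlier one. Then $\phi(u_j)$ has at most $n$ choices. Each later vertex must land in one of the at most $\Delta$ edges of $G$ through an already placed vertex, so it has at most $k\Delta$ choices. Writing $v_j=|V(S_j)|$, this gives
\[
\mathbb{P}[S\subseteq \psi(G)]\le \prod_{j=1}^{c} n\,(k\Delta)^{v_j-1}(C/n)^{v_j}=\prod_{j=1}^{c} C\,(Ck\Delta)^{v_j-1}\,n^{-(v_j-1)}.
\]
If $S$ does not embed in $G$, the probability is $0$. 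Otherwise each $S_j$ is a subgraph of $G$, so $v_j-1\ge e(S_j)/m_1(G)$. Using $|U|\le k|S|$ (not $2|S|$, since these are $k$-graphs), you get $(C_{\ref{kelly}}n^{-1/m_1(G)})^{|S|}$.

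Alternatively, your $G$-rooted scheme survives if the roots are taken as an unordered set, since $\binom{n}{c}(k|S|)^c\le n^c(ek|S|/c)^c\le n^c e^{O(|S|)}$. That is not the argument you gave, though.
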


Furthermore, combined with Proposition \ref{FKNP}, it gives an upper bound $n^{-1/m_1(G)}\log n$ for the threshold of $G$.


\section{Proof of Theorem~\ref{combin}}
\label{sec:proffram}
Our proof of Theorem~\ref{combin} is divided into the following two steps.

\noindent
\textbf{Step 1.} Let $D$ be an orientation of $H$ satisfing the assumption of Theorem~\ref{combin}. Then there is an independent set $X$ of size $o(n)$ such that the $1$-denstity of the subgraph of $H$ obtained by deleting a set of vertex-disjoint in-stars with centers in $X$ is at most $d-\Omega(1)$ (see Lemma~\ref{m1H}).

\noindent
\textbf{Step 2.} If there exists such an independent set $X$ of $H$, then $G\cup G(n,n^{-1/d-\Omega(1)})$ contains a copy of $H$ (see Lemma \ref{pt}) where $G$ is a deterministic graph with positive density.

Before our proofs, we introduce some additional notation.

A digraph $S^+ = (V, A)$ is called an \emph{out-star} if there exists a unique vertex $v \in V$ such that $A = \{(v, u) : u \in V \setminus \{v\}\}$. 
The vertex $v$ is called the \emph{center} of $S^+$.
The vertices $V\setminus\{v\}$ are called the \emph{leaves} of $S^+$.
Given a digraph $D$ and  an independent set $Q$ in $D$, let $S(Q)$ be the subgraph with the arc set $A(V(D)\setminus Q, Q)$.
Note that $S(Q)$ is a collection of out-stars with centers in $V(D)\setminus Q$ and leaves in $Q$.
For each such out-star in $S(Q)$, we choose exactly one arc towards a leaf and let $M^-(Q)$ be the set of all chosen arcs (See Figure \ref{outstar}). 
Similarly, a digraph $S^- = (V, A)$ is called an \emph{in-star} if there exists a unique vertex $v \in V$ such that $A = \{(u, v) : u \in V \setminus \{v\}\}$. The vertex $v$ is called the \emph{center} of $S^-$.

\begin{figure}[htp]
  \centering
  \includegraphics[width=8cm]{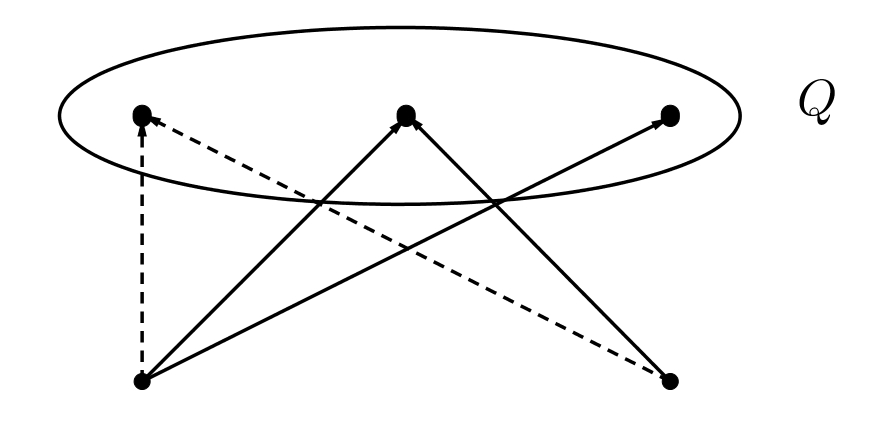}\\
  \caption{$S(Q)$ where dotted arcs are edges of $M^-(Q)$}
  \label{outstar}
\end{figure}

\begin{lemma}
\label{m1H}
 Let $d>1$ be a real number.
 Suppose $1/n\ll \varepsilon'\ll 1/K\ll\gamma \ll 1/d,1/\Delta$.
 Let $H$ be a graph with $\Delta(H)\leq\Delta$ and $D$ be an acyclic orientation of $H$.
 Let $V':=\{v\in V(H),|B_{D}^{+K}(v)|\geq K/2\}$. Suppose that every subgraph $H'$ of $H$ of order $m$ satisfies:
 \begin{itemize}
   \item[(1)] If $m>K/2$, then $d(H')\leq d-\varepsilon'$ or there exists $v\in V(H')\cap V'$ with $E(D[B^{+(K+1)}(v)])\subseteq E(H')$.
   \item[(2)] If $m\leq K/2$, then $e_{H'}\leq d(m-1)-1/2$.
 \end{itemize}
Then there exists an independent set $X\subseteq V(H)$ of size at most $\gamma n/2$ such that $m_1(H\setminus M^-(X))\leq d-\varepsilon'$.
\end{lemma}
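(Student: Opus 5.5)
The plan is to reduce the lemma to a hitting-set problem via hypothesis (1), and then solve the hitting-set problem probabilistically. Note first that $M^-(X)\subseteq A(D)$, so every subgraph $H'$ of $H\setminus M^-(X)$ is also a subgraph of $H$, and hypotheses (1) and (2) apply to it. We must show $d(H')\le d-\varepsilon'$ for every such $H'$ with $v_{H'}=m\ge 2$.

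\emph{Small subgraphs.} If $m\le K/2$, then (2) gives $e_{H'}\le d(m-1)-1/2$. Hence
$d(H')\le d-\frac{1}{2(m-1)}\le d-\frac1K\le d-\varepsilon'$, using $\varepsilon'\ll 1/K$. This holds for every $X$.

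\emph{Large subgraphs.} If $m>K/2$ and $d(H')>d-\varepsilon'$, then (1) yields $v\in V'\cap V(H')$ with $E(D[B^{+(K+1)}(v)])\subseteq E(H')$. This is impossible if $M^-(X)$ contains an arc of $D[B^{+(K+1)}(v)]$, since that arc is missing from $H'$. So it suffices to find $X$ with the following property.

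\emph{Hitting property.} Call $v\in V'$ \emph{hit} by $X$ if some $u\in B^{+K}(v)\setminus X$ has an out-neighbour in $X$. In that case $M^-(X)$ contains an arc $(u,x')$ with $x'\in N^+(u)\subseteq B^{+(K+1)}(v)$, which is an arc of $D[B^{+(K+1)}(v)]$. So the lemma reduces to the following claim: there is an independent set $X$ with $|X|\le\gamma n/2$ that hits every $v\in V'$.

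\emph{Random construction.} Since $|B^{+K}(v)|\ge K/2$, each $v\in V'$ has at least $K/2-1$ vertices $x\in B^{+K}(v)\setminus\{v\}$, and each has an in-neighbour in $B^{+K}(v)$, namely its predecessor on a shortest path from $v$. Greedily choose $r\ge (K/2-1)/(\Delta^{4}+1)$ of these candidates at pairwise distance at least $5$ in $H$.
\begin{itemize}
\item[(a)] Choose $X_1$ by including each vertex independently with probability $p:=\gamma/4$.
\item[(b)] Let $X_2$ be the vertices of $X_1$ with no $H$-neighbour in $X_1$. Then $X_2$ is independent, and $|X_2|\le\gamma n/3$ w.h.p.\ by Chernoff.
\item[(c)] A candidate $x$ lies in $X_2$ with probability at least $p(1-p)^{\Delta}$, and these events are independent for the $r$ well-separated candidates. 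Moreover, if $x\in X_2$ its in-neighbour $u$ in the ball is not in $X_2$, since $X_2$ is independent. Hence $\Pr[v \text{ not hit by } X_2]\le e^{-p(1-p)^\Delta r}\le e^{-c\gamma K/\Delta^4}$.
\end{itemize}
Since $1/K\ll\gamma$, this failure probability is tiny. Each event depends only on vertices within distance $K+1$ of $v$, so bounded differences give w.h.p.\ at most $\beta n$ unhit vertices, where $\beta:=2e^{-c\gamma K/\Delta^4}$.

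\emph{Repairing unhit vertices.} This is the step I expect to be the main obstacle. Each unhit $v$ must be repaired while keeping $X$ independent and not un-hitting other vertices. A naive Lov\'asz Local Lemma fails because the dependency degree is $\Delta^{O(K)}$, which beats $e^{-c\gamma K}$ once $\gamma\ll 1/\Delta$. My first attempt is a local alteration, done as follows.
\begin{itemize}
\item Strengthen ``hit'' to ``\emph{robustly hit}'': $B^{+K}(v)$ contains at least $r/2$ of its separated candidates in $X_2$. The same computation shows that all but at most $\beta n$ vertices of $V'$ are robustly hit w.h.p.
\item Take a maximal set $Y$ of non-robustly-hit vertices at pairwise distance greater than $2K+4$. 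For each $y\in Y$, add an out-neighbour $x_y$ of $y$ to $X$, and delete from $X_2$ all vertices within distance $2$ of $x_y$.
\item Each deletion destroys at most $\Delta^2$ hits, and within the $(K+1)$-neighbourhood of any $w$ only $O(1)$ such repairs are performed. So robustly hit vertices stay hit, provided $r/2>O(\Delta^2)$.
\item Vertices that were not robustly hit and were skipped lie within distance $2K+4$ of a repaired one. Handle them by iterating the same step, or by first partitioning $V'$ into $\Delta^{O(K)}$ classes of pairwise far-apart vertices and repairing class by class.
\end{itemize}
The added set has size at most $\beta n\Delta^{O(K)}$. I will need to check that this is $\le\gamma n/6$; if the constants do not cooperate, I will replace $p$ by a larger value and re-run the estimate. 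This gives $|X|\le\gamma n/2$, and the reduction above then yields $m_1(H\setminus M^-(X))\le d-\varepsilon'$.
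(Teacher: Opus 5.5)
Your reduction to the hitting property matches the paper exactly, and your construction of $X_2$ is fine. The gap is the repair step, which is the real content of the lemma, and your sketch of it does not go through.

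\emph{The threshold.} As written, ``robustly hit'' almost never holds. Each candidate lies in $X_2$ with probability at most $p=\gamma/4<1/2$, so having $r/2$ candidates in $X_2$ is exponentially unlikely. You would need a threshold like $p(1-p)^{\Delta}r/2$.

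\emph{Iterating the repairs.} The separation of $Y$ bounds the number of repairs near a fixed $w$ only within a single round. The non-robustly-hit vertices need not be far apart, so you must iterate over up to $\Delta^{O(K)}$ rounds or classes. Across all rounds, the number of repairs whose deleted radius-$2$ balls meet the candidates of $w$ is bounded only by the number of non-robustly-hit vertices within distance $3$ of $B^{+K}(w)$. That number is not $O(1)$, and nothing in your argument compares it with the $\Theta(\gamma K/\Delta^4)$ spare candidates. Later repairs can also undo earlier ones: $x_{y'}$ may equal $y$, putting $y$ into $X$, or it may be adjacent to $x_y$, so independence forces deleting one of them.

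\emph{The size bound.} Once robustly hit vertices can become unhit, repairs cascade, which is what your bound $\beta n\Delta^{O(K)}$ reflects. That bound can never be made at most $\gamma n/6$, since $\beta\Delta^{O(K)}=2\exp\bigl(K(O(\log\Delta)-c\gamma/\Delta^4)\bigr)$ grows with $K$. Enlarging $p$ does not help, because $|X|\le\gamma n/2$ forces $p=O(\gamma)$.

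The missing idea is to separate hitting from independence and to use that $D$ is acyclic.

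First find a set $X_1$, not necessarily independent, with $|X_1|\le\gamma n/2$ and $(B^{+K}(v)\setminus\{v\})\cap X_1\neq\emptyset$ for every $v\in V'$. A $\gamma/4$-random set misses a given such ball with probability at most $e^{-\gamma K/10}$. For each of the at most $2e^{-\gamma K/10}n$ failures, simply add an arbitrary vertex of the ball. Since no independence has to be maintained at this stage, repairs cannot interfere with one another.

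Then repeatedly delete from the current set all in-neighbours of the sinks of the digraph it induces. Sinks are never deleted and remain sinks. So the process ends with an independent $X\subseteq X_1$ in which every deleted vertex has a surviving out-neighbour. Hence $(B^{+(K+1)}(v)\setminus\{v\})\cap X\neq\emptyset$ for all $v\in V'$, at the cost of only one unit of radius; the witness differs from $v$ by acyclicity.

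Finally, let $u$ be a vertex of this set closest to $v$, and let $u^-$ be its predecessor on a shortest path from $v$. Then $u^-\in B^{+K}(v)\setminus X$: if $u^-\neq v$ this follows from minimality, and if $u^-=v$ it follows from independence. Moreover $u^-$ has the out-neighbour $u\in X$, which is exactly your hitting property. With this, the $X_2$ step and the robust-hitting machinery become unnecessary.
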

\begin{proof}
Set $K:=10\ln (20/\gamma)/\gamma$.
Let $V=V(H)$ and $V':=\{v\in V(H),|B_{D}^{+K}(v)|\geq K/2\}$.
Let $V_{\gamma/4}$ be a subset of $V$ where each vertex is included independently with probability $\gamma/4$.
Clearly $ \mathbb{E}[|V_{\gamma/4}|]=\gamma n/4$.
By Chernoff's bound (cf. \cite[Corollary 2.3]{Random}), we have
\begin{equation}
\label{size}
\mathbb{P}[|V_{\gamma/4}|\leq3\gamma n/8]=1-o(1).
\end{equation}
    For any $v\in V'$, it is easy to see that
    \[
    \mathbb{P}\left[(B_{D}^{+K}(v) \setminus \{v\}) \cap V_{\gamma/4} =\emptyset\right]=(1-\gamma/4)^{|B_{D}^{+K}(v)|-1}\leq e^{-\frac{\gamma}{4}(|B_{D}^{+K}(v)|-1)}\leq e^{-\frac{\gamma \cdot K}{10}},
    \]
   Let $Z$ be the number of vertices $v$ in $V'$ such that $(B_{D}^{+K}(v) \setminus \{v\}) \cap V_{\gamma/4}=\emptyset$.
   Then
    \[
    \mathbb{E}[Z]\leq e^{-\frac{\gamma \cdot K}{10}}|V'|.
    \]
    By Markov's inequality, we have
    \[
    \mathbb{P}[Z\geq2e^{-\frac{\gamma \cdot K}{10}}|V'|]\leq1/2.
    \]
   Combining with \eqref{size}, there exists a set $V_{\gamma/4}$ satisfing the following properties:
   \begin{itemize}
     \item[$(Q1)$] $|V_{\gamma/4}|\leq3\gamma n/8$,
     \item[$(Q2)$] For all but at most $2e^{-\frac{\gamma \cdot K}{10}}|V'|$ vertices $v$ of $V'$, we have $(B_{D}^{+K}(v) \setminus \{v\}) \cap V_{\gamma/4}\neq\emptyset$.
   \end{itemize}

 For each vertex $v\in V'$ such that $(B_{D}^{+K}(v) \setminus \{v\}) \cap V_{\gamma/4}=\emptyset$, we choose a vertex $u \in (B_{D}^{+K}(v) \setminus \{v\})$ arbitrarily and add it to a set $X_0$. 
 Note that $|X_0|\leq 2e^{-\frac{\gamma \cdot K}{10}}|V'|\leq 2e^{-\frac{\gamma \cdot K}{10}}n\leq\gamma n/8$ since $K=10\ln (20/\gamma)/\gamma$.
 Let $X_1:=V_{\gamma/4}\cup X_0$. By construction, $X_1$ satisfies the following properties:
    \begin{itemize}
     \item[$(Q1')$] $|X_1|\leq\gamma n/2$,
     \item[$(Q2')$] For each vertex $v$ of $V'$, we have $(B_{D}^{+K}(v) \setminus \{v\}) \cap X_1\neq\emptyset$.
    \end{itemize}
 Next, we use the following process.
 
 \noindent
 \textbf{To output an independent set}: Initiate $X_1=V_{\gamma/4}\cup X_0$, $B_1=\{v\in X_1:|N_{D[X_1]}^+(v)|=0\}$ and $i=1$.
 Note that $D[X_1]$ is also acyclic.

 \emph{Step 1}. Define $X_{i+1}=X_i\setminus N^-(B_i)$.

 \emph{Step 2}. Define $B_{i+1}=\{v\in X_{i+1}:|N_{D[X_{i+1}]}^+(v)|=0\}$.

 \emph{Step 3}. If $X_{i+1}$ is not independent in $D$, then update $i:=i+1$ and go to Step 1; otherwise, terminate the process.

 We claim that after the process, we obtain an independent set $X$ satisfying the following.
  \begin{itemize}
     \item[$(P1)$] $|X|\leq\gamma n/2$.
     \item[$(P2)$] For all vertices $v$ of $V'$, we have $(B_{D}^{+(K+1)}(v) \setminus \{v\}) \cap X\neq\emptyset$.
   \end{itemize}

To see this, we first show that the process terminates.
Suppose that $X_i$ is not an independent set in $D$.
Then as $D[X_i]$ is acyclic, there exists an edge $(u,v)\in A(D[X_i])$ such that $v\in B_i$.
Thus, $u \notin X_{i+1}$ so $|X_{i+1}| < |X_i|$.
Hence, the process terminates in at most $|X_1|$ steps.

We now prove $(P1)$ and $(P2)$. 
First, we obtain $(P1)$ by $(Q1')$ and $|X|\leq|X_1|$.
For $(P2)$, by $(Q2')$, $(B_{D}^{+K}(v) \setminus \{v\}) \cap X_1\neq\emptyset$ for any vertex $v\in V'$.
If $(B_{D}^{+K}(v) \setminus \{v\})\cap X\neq\emptyset$, then it is obvious that $B_{D}^{+(K+1)}(v)\cap X\neq\emptyset$.
If $(B_{D}^{+K}(v) \setminus \{v\})\cap X=\emptyset$, then the vertices in $(B_{D}^{+K}(v) \setminus \{v\})\cap X_1$ were deleted in the above process.
We choose an arbitrary vertex $u$ in $(B_{D}^{+K}(v) \setminus \{v\})\cap X_1$. Then there exists an index $i$ and a vertex $w \in B_i$ such that $(u,w)$ is an arc in $D$.
Since $w \in B_j$ for all $j \geq i$, $w$ is preserved in $X$.
Thus, $w \in  B_{D}^{+(K+1)}(v) \cap X$.
In addition, $w$ is different from $v$ since $D$ is acyclic.
Therefore, $(B_{D}^{+(K+1)}(v) \setminus \{v\}) \cap X\neq\emptyset$ for all $v \in V'$, giving $(P2)$.

We now show that $X$ satisfies the conclusion of the lemma.
Let $H'$ be a subgraph of $H\setminus M^-(X)$ of order $m$.

(1) If $m>K/2$, then we claim that there is no vertex $v\in V(H') \cap V'$ such that $E(D[B_{D}^{+(K+1)}(v)])\subseteq E(H')$.
For all vertices $v$ of $V'$, we have $(B_{D}^{+(K+1)}(v) \setminus \{v\}) \cap X\neq\emptyset$ by (P2).
We choose $u\in B_{D}^{+(K+1)}(v) \setminus \{v\}$ and $u^{-} \in B_{D}^{+K}(v)$ such that $u^{-} \notin X$, $u \in X$ and $(u^{-},u) \in A(D)$ and thus, $|N^+_{D}(u^-,X)|>0$.
This can be done by choosing $u$ be the closest vertex to $v$ in $B_{D}^{+(K+1)}(v) \cap X$ and $u^-$ be the in-neighbor of $u$ in $B_{D}^{+K}(v)$.
By the construction of $M^-(X)$, there exists an out-neighbor $u' \in X$ of $u^-$ (it is possible that $u'=u$) such that $u^-u' \in M^-(X)$ and thus $u^-u' \notin E(H')$.
As $u^- \in B_{D}^{+K}(v)$, we have $u' \in B_{D}^{+(K+1)}(v)\cap X$.
Therefore, $E(D[B_{D}^{+(K+1)}(v)])\nsubseteq E(H')$.
By the assumption of the lemma, we have $d(H')\leq d-\varepsilon'$.

(2) If $m\leq K/2$, then we obtain that
\begin{equation}\nonumber
\begin{split}
d(H')=&\frac{e_{H'}}{m-1}\leq\frac{d(m-1)-1/2}{m-1}=d-\frac{1}{2(m-1)}\leq d-\frac{1}{K}\leq d-\varepsilon',
\end{split}
\end{equation}
where the last inequality holds since $\varepsilon'\le 1/K$.
\end{proof}

The next lemma shows that if there exists such an $X$ in Lemma \ref{m1H}, then we can obtain the desired upper bound on the randomly perturbed threshold of $H$.

\begin{lemma}
\label{pt}
Let $d>1$ be a real number and let $1/n\ll\eta\ll \varepsilon'\ll 1/K\ll\gamma\ll\varepsilon,1/d,1/\Delta$.
Let $H$ be a graph with $\Delta(H)\leq\Delta$ and satisfying the following condition:
There exists an independent set $X\subseteq V(H)$ of size at most $\gamma n/2$ and an acyclic orientation $D$ of $H$ such that $m_1(H\setminus M^-(X))\leq d-\varepsilon'$.
If $G$ is an $n$-vertex graph with at least $\varepsilon n^2$ edges and $p=n^{-\frac{1}{d}-\eta}$, then w.h.p.  $G\cup G(n,p)$ contains a copy of $H$.
\end{lemma}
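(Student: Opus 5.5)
First embed the edges of $M^-(X)$ into $G$, and only then embed the rest of $H$ into $G(n,p)$.

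\textbf{Structure of $M^-(X)$.} Since $X$ is independent, every arc of $M^-(X)$ goes from some $u\in V(H)\setminus X$ to a leaf in $X$. Each out-star of $S(X)$ contributes exactly one arc, so each $u$ is the tail of at most one arc of $M^-(X)$. Hence $M^-(X)$ is a vertex-disjoint union of in-stars centred at vertices $x\in X$, each with at most $\Delta$ leaves. Also $|X|\le \gamma n/2$ and $|M^-(X)|\le \Delta\gamma n/2$.

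\textbf{Step 1: embed the stars in $G$.} Because $e(G)\ge\varepsilon n^2$, we can repeatedly delete vertices of degree less than $\varepsilon n/2$. This leaves a subgraph $G_0\subseteq G$ on at least $\varepsilon n$ vertices with minimum degree at least $\varepsilon n/2$. Since $\gamma\ll\varepsilon,1/\Delta$, the stars occupy at most $(\Delta+1)\gamma n/2\ll \varepsilon n$ vertices, so we can place them greedily in $G_0$ as vertex-disjoint stars $K_{1,s}$, $s\le\Delta$. We should keep some randomness in this placement, for instance by choosing centres and leaves uniformly among the available candidates, so that the partial embedding $\phi_0$ is spread in the sense used below.

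\textbf{Step 2: extend to all of $H$ in $G(n,p)$.} Let $H^*:=H\setminus M^-(X)$, so that $m_1(H^*)\le d-\varepsilon'$. We need a copy of $H^*$ in $G(n,p)$ that agrees with $\phi_0$ on $V(M^-(X))$. Consider the distribution $\mu$ on bijections $V(H)\to V(G)$ obtained by taking $\phi_0$ as constructed in Step 1 and extending it by a uniformly random bijection on the remaining vertices. We aim to show $\mu$ is $C/n$-vertex-spread for some $C=C(\varepsilon,\Delta,\gamma)$.
\begin{itemize}
\item On vertices outside the stars this is the Stirling bound noted before Proposition~\ref{kelly}.
\item On star vertices, each center or leaf is chosen from at least $\varepsilon n/4$ options, so each such image has probability $O(1/(\varepsilon n))$.
\end{itemize}
Apply Proposition~\ref{kelly} to $H^*$ (max degree at most $\Delta$) with $\mu$ restricted to $H^*$. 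This gives an $O(n^{-1/m_1(H^*)})$-spread distribution on copies of $H^*$ (compatible with $G\supseteq\phi_0(M^-(X))$), which is at most $O(n^{-1/(d-\varepsilon')})$. Since $\eta\ll\varepsilon'$, $p=n^{-1/d-\eta}\ge n^{-1/(d-\varepsilon')}\log n\cdot C$. Proposition~\ref{FKNP} with $r=e_{H^*}\le\Delta n$ then gives w.h.p.\ a copy of $H^*$ in $G(n,p)$ of the required form. Taking its union with the star edges in $G$ yields $H$.

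\textbf{Main obstacle.} The delicate point is making the spread argument compatible with the fixed $G$-part. Proposition~\ref{FKNP} requires spreadness of the hypergraph of edge sets to be found in $G(n,p)$, jointly with the random choice of $\phi_0$. We must make sure the conditioning on $\phi_0$ still leaves vertex-spreadness of order $1/n$.

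The cleanest route is to work with edge sets $E(\psi(H^*))$ where $\psi\sim\mu$. We bound $\mu(\psi(x_i)=y_i\ \forall i)\le (C/n)^s$ directly:
\begin{itemize}
\item the star placement is sequential with at least $\varepsilon n/4$ choices at each step, giving $(4/(\varepsilon n))$ per prescribed star vertex;
\item the uniform completion gives roughly $e/n$ per remaining vertex.
\end{itemize}
Proposition~\ref{kelly} is stated for a complete target. So we should check that its proof, which only uses vertex-spreadness, applies when the target is ``$K_n$ with the support of $\mu$''. Alternatively, we can absorb the fixed star edges by noting that Proposition~\ref{kelly} lets the target hypergraph $H$ be arbitrary (here: $K_n$), with embeddings of $H^*$ only.
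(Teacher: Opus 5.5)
Your proposal is correct and follows the paper's proof. The steps match: place the in-stars of $M^-(X)$ into $G$ sequentially at random with at least $\varepsilon n/4$ choices per vertex, complete the bijection uniformly, verify $O(1/n)$-vertex-spreadness, apply Proposition~\ref{kelly} to $H\setminus M^-(X)$, and finish with Proposition~\ref{FKNP}. The only difference is cosmetic: you embed the stars into a subgraph $G_0$ of minimum degree at least $\varepsilon n/2$, while the paper takes centres from the set $L$ of vertices of $G$-degree at least $\varepsilon n/2$ and leaves from their $G$-neighbourhoods.

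On your ``main obstacle'': Proposition~\ref{kelly} is already stated for an arbitrary target, not a complete one. What you actually need, and what the paper uses implicitly, is that the spread measure it produces is the pushforward of $\mu$ under $\psi\mapsto\psi(E(H^*))$, so every copy in its support extends through $G$-edges. Your first suggested fix (checking this in its proof) is therefore the right one. A black-box application with target $K_n$ alone would lose the star constraint.
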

\begin{proof}
Fix the vertex set $X$ given in Lemma \ref{m1H} and let $E=M^-(X)$ to simplify the notation.
Note that $X$ is independent and thus, $E$ is the edge set of a family of  vertex-disjoint in-stars with roots in $X$ (See Figure~\ref{blue}).
Let $\overline{H}$ be the subgraph of $H$ obtained by removing the edges of $E$.
\begin{figure}[htbp]
  \centering
  \includegraphics[width=8cm]{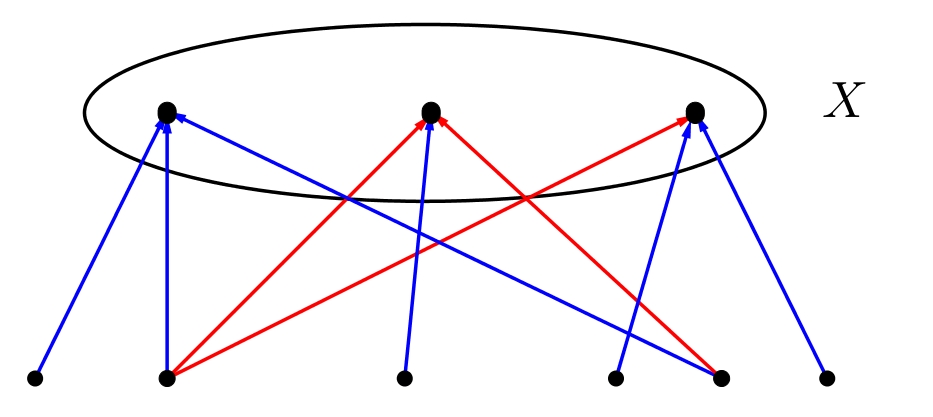}\\
  \caption{An example for $E=M^-(X)$, the blue edegs.}
  \label{blue}
\end{figure}


We define $G^c$ as a $2$-edge-colored $K_n$ on $V(G)$ where the blue edges are exactly the edges of $E(G)$.
Let $\Phi$ be the family of all embeddings of $H$ to $G^c$ such that all edges of $E$ are mapped to blue edges.
Then we note that if there is an embedding $\phi \in \Phi$ such that all the edges in $\phi(\overline{H})$ are present in $G(n,p)$, then $G\cup G(n,p)$ contains a copy of $H$.
We now show that the family $\Phi$ admits a $\frac{4}{\varepsilon n}$-vertex spread distribution.

Let $L=\{v\in V(G):d_G(v)\geq\varepsilon n/2\}$ be the set of high degree vertices.
As $2\varepsilon n^2\leq 2e_G\leq|L|\cdot n+\varepsilon n/2\cdot n$, we can obtain that $|L|\geq \varepsilon n$.
We now construct a random embedding $\varphi$ from $\{x_1,\dots, x_n\}$ to $V(K_n)$.
Label the vertices of $X$ as $X=\{x_1',\ldots,x_{|X|}'\}$ and let $Y_i=N^-_{D[E]}(x_i')$ for each $i\in[|X|]$.
For each $j\in [|X|]$, assume that $x_1',\ldots,x_{j-1}'$ and $Y_1,\ldots,Y_{j-1}$ are already embedded and let $I$ be the collection of the images.
We first choose $x_j'$ uniformly at random from $L\setminus I$ and there are at least $|L|-(\Delta+1) \gamma n/2 \geq \varepsilon n/4$ choices for the images of $x_j'$.
Label $Y_j=\{y_{j_1},\ldots,y_{j_{|Y_j|}}\}$. 
We choose the image of $y_{j_{\ell}}$ uniformly at random from $N_{G}(x_j')\setminus(I \cup \varphi(\{y_{j_1}, \ldots, y_{j_{\ell-1}}\}))$ for each $\ell\in[|Y_j|]$. By the same computation,  there are at least $\varepsilon n/2-(\Delta+1) \gamma n/2\geq\varepsilon n/4$ choices for the images of each $y_{j_{\ell}}$ where $\ell\in[|Y_j|]$.
After we embed all the vertices in $X \cup N_{D[E]}^-(X) = V(E)$, we embed the vertices in $V (H)\setminus V (E)$ to the remaining vertices of $V(G)$ uniformly at random.
This defines a probability distribution on $\Phi$, denoted by $\mu$.
We now prove that $\mu$ is a $\frac{4}{\varepsilon n}$-vertex spread distribution.

Let $C:=4/\varepsilon$.
Recall that in our random process of generating the embeddings of $H$ to $K_n$, we first choose the images of the vertices $V(E)$ and then assign images to the rest of the vertices.
For any $s\in[n]$ and any sequences $y_1,\ldots,y_s\in V(\overline{H})$, $z_1,\ldots,z_s\in V(K_n)$, let $t=|\{y_1,\ldots,y_s\}\cap V(E)|$.
If $t=s$, then $\mu(\{\psi:\psi(y_i)=z_i, \forall \ i\in[s]\})\leq\frac{1}{(\varepsilon n/4)^s}=\left(\frac{C}{n}\right)^s$; if $t<s$, then we have
\begin{equation}\nonumber
\begin{split}
\mu(\{\psi:\psi(y_i)=z_i, \forall \ i\in[s]\})&\leq\frac{1}{(\varepsilon n/4)^t\cdot(n-(\Delta+1)\gamma n/2)\cdots(n-(\Delta+1)\gamma n/2-(s-t-1))}\\
&=\left(\frac{C}{n}\right)^t\frac{(n-(\Delta+1)\gamma n/2-(s-t))!}{(n-(\Delta+1)\gamma n/2)!}\\
&\leq\left(\frac{C}{n}\right)^t\left(\frac{e}{n-(\Delta+1)\gamma n/2}\right)^{s-t} \le \left(\frac{C}{n}\right)^{s}.
\end{split}
\end{equation}
Therefore, $\Phi$ admits a $\frac{4}{\varepsilon n}$-vertex spread distribution.
By Proposition~\ref{kelly}, there exists $C'=C'(\varepsilon)$ and there is a $(C'/n^{1/m_1(\overline{H})})$-spread distribution on subgraphs of $K_n$ isomorphic to $H$ which admits a labeling such that all edges of $E$ are blue edges (edges of $G$).

By Lemma~\ref{m1H}, we obtain that $m_1(\overline{H})\leq d-\varepsilon'$ and therefore $p= n^{-\frac{1}{d}-\eta}\geq n^{-\frac{1}{d-\varepsilon'}+\eta}\geq n^{-\frac{1}{m_1(\overline{H})}}\log n$ by $1/n\ll\eta\ll\varepsilon' \ll1/d$.
By Proposition~\ref{FKNP}, we obtain that w.h.p. the random graph $G(n,p)$ contains a subgraph isomorphic to $\overline{H}$ such
that all edges in $E$ appear in $H$.
This yields a copy of $H$ in $G\cup G(n,p)$.
\end{proof}

The proof of Theorem~\ref{combin} immediately follows from the combination of Lemmas~\ref{m1H} and~\ref{pt}.

\section{Proofs of Theorems \ref{maindegenerate} and \ref{mainthm:regular}}\label{sec:profdegene}
\subsection{Proofs of Theorems \ref{maindegenerate} and \ref{mainthm:regular}}\label{sec:profdegene}

To prove Theorems~\ref{maindegenerate} and \ref{mainthm:regular}, it suffices to construct an acyclic orientation of $H$ satisfying the assumptions in Theorem~\ref{combin}. 
We first show that a $d$-degenerate graph with bounded maximum degree admits such an acyclic orientation.
\begin{proof}[Proof of Theorem \ref{maindegenerate}]
Let $1/n\ll\eta\ll\varepsilon'\ll1/K\ll\varepsilon,1/d,1/\Delta$.
Let $H$ be a $d$-degenerate graph with maximum degree at most $\Delta$.
Note that there is an ordering $(v_1,\ldots,v_n)$ of $V(H)$ such that for every $i$, $v_i$ has at most  $d$ neighbors among $\{v_1, \ldots, v_{i-1}\}$.
We define an orientation of $H$ as follows:
for each edge $v_iv_j,i<j$, orient the edge from $v_j$ to $v_i$.
The resulting digraph $D$ is acyclic and each vertex has out-degree at most $d$.
Let $V'=\{v\in V(H),|B_{D}^{+K}(v)|\geq K/2\}$ and $H'$ be a subgraph of $H$ of order $m$.
In addition, let $D'$ be the subdigraph of $D$ induced by $V(H')$.

(1) If $K/2<m\leq 2d\left((K+1)\Delta^{K+1}+1\right)$ then we note that the number of edges in $H'$ is at most $dm-\frac{d(d+1)}{2}$ since the first $d$ vertices has at most $d-1,d-2,\ldots,0$ out-neighbors in $H'$ while each other vertex has at most $d$ out-neighbors in $H'$.
Thus, we have 
\[
d(H')\leq\frac{dm-\frac{d(d+1)}{2}}{m-1}=d-\frac{\binom{d}{2}}{m-1}\leq d-\frac{\binom{d}{2}}{2d\left((K+1)\Delta^{K+1}+1\right)}\leq d-\varepsilon',
\]
where the last inequality holds since $\varepsilon'\ll1/K\ll1/\Delta,1/d$.

If $m\geq 2d\left((K+1)\Delta^{K+1}+1\right)+1$ and every vertex $v\in V(H')\cap V'$ satisfies $E(D[B_{D}^{+(K+1)}(v)])\setminus E(H')\neq\emptyset$, then we obtain the following observation:
\emph{For any vertex} $v\in V(H')$, \emph{there exists a vertex} $u\in B_D^{+(K+1)}(v) \cap V(H')$ \emph{such that} $d_{D'}^+(u)\leq d-1$.  
Indeed,
\begin{itemize}
    \item 
If $v \in V'$ and $B_{D}^{+(K+1)}(v) \subseteq V(H')$, then by the assumption, there exists an edge $(u,u^+)\in E(D[B_{D}^{+(K+1)}(v)])\setminus E(H')$. Then the vertex $u$ is as desired.
 \item  If $v \in V'$ and $B_{D}^{+(K+1)}(v) \nsubseteq V(H')$, then we choose a vertex $u \in V(H')$ such that there exists an out-neighbor $u^+$ of $u$ in $B_{D}^{+(K+1)}(v)$ with $u^+ \notin V(H')$. Thus, we have $d_{D'}^+(u)\leq d-1$.
 \item  If $v \notin V'$, then $|B_{D}^{+K}(v) \cap V(H')|\leq K/2 < d^K$.
Thus, there exists a vertex $u \in B_{D}^{+K}(v) \cap V(H')$ such that $d_{D'}^+(u)\leq d-1$. 
\end{itemize}

We now observe that each vertex $u$ is contained in at most $\sum_{i=0}^{K+1}\Delta^i\leq (K+1)\Delta^{K+1}+1$ sets of the form $B_{D}^{+(K+1)}(v)$ for some $v\in V(H')$. 
Thus, we have
\[
e_{H'}\leq\sum_{v\in V(H')}d_{D'}^+(v)\leq dm-\frac{m}{(K+1)\Delta^{K+1}+1}.
\]
Therefore, writing $K':=(K+1)\Delta^{K+1}+1$, the $1$-density of $H'$ can be bounded by 
\begin{equation}\nonumber
\begin{split}
d(H')&\leq\frac{(d-{1}/{K'})m}{m-1}\leq d-\frac1{K'}+\frac{d}{m-1}\leq d-\frac{1}{K'}+\frac{d}{2d K'}\leq d-\varepsilon',
\end{split}
\end{equation}
where the last inequality comes from $\varepsilon'\ll1/K\ll1/d,1/\Delta$.

(2) If $m\leq d$, then $e_{H'}\leq\binom{m}{2}\leq d(m-1)-1/2$.
If $d+1\leq m\leq K/2$, then we have
\[
e_{H'}\leq dm-\frac{d(d+1)}{2}\leq d(m-1)-\frac{1}{2},
\]
as $d\geq2$.

Therefore, $D$ satisfies the condition of Theorem~\ref{combin}, which completes the proof.
\end{proof}

We now prove Theorem~\ref{mainthm:regular}.
 

\begin{proof}[Proof of Theorem \ref{mainthm:regular}]
Choose the parameters such that 
$1/n\ll\eta\ll\varepsilon'\ll 1/K\ll\varepsilon, 
\gamma, 1/d$.
Let $H$ be as given in the statement of Theorem~\ref{mainthm:regular}.
It suffices to verify that $H$ satisfies the conditions of Theorem~\ref{combin} for $d/2$ under a suitable acyclic orientation.

We  first construct an acyclic orientation  $D$ of $H$. 
Note that each nonempty set of vertices of size at most $\gamma n$ has at least $d$ outgoing edges, so each connected component of $H$ has size at least $\gamma n$.
Consequently, the number of components $c$ satisfies $c\le 1/\gamma$. 
Let $C_1, \ldots, C_c$ be the components of $H$.
For each $1 \le i \le c$, choose an arbitrary vertex $r_i$ in $C_i$ and let $T_i$ be a spanning tree of $C_i$ rooted at $r_i$. 
Let $T$ be the union of all $T_i$. Then $T$ is a spanning forest of $H$ with $c$ trees.
We define the levels $L_0, \ldots, L_\ell $ of the spanning forest $T$ as follows. Let $L_0=\{r_1, \ldots, r_c\}$ be the set of roots. 
For $i\ge 1$, the level $L_i$ is the set of all 
vertices at distance $i$ from their respective roots within their own component tree of $T$. 
For the edges of $H$ between two different layers $L_i$ and $L_j$, orient each edge from the higher level to the lower level. i.e., if $i<j$, then orient the edge from $L_j$ to $L_i$.
For the edges within the same layer $L_i$, we choose an arbitrary acyclic orientation in each $L_i$.
This yields an acyclic orientation $D$ of $H$.

Let $V':=\{v\in V(H): |B_{D}^{+K}(v)|\geq K/2\}$ and let $H'$ be an $m$-vertex subgraph of $H$.  
We now verify that $H'$ satisfies conditions (1) and (2) of Theorem \ref{combin}.  

\noindent
\textbf{Verify Theorem \ref{combin} (1).}  
Suppose $m>K/2$ and for every vertex $v\in V(H')\cap V'$, we have $E(D[B^{+(K+1)}(v)])  \nsubseteq E(H')$.
We show that $d(H')\le d/2-\varepsilon'$. 

If $K/2<m<\frac{6(K+2)d^{K+2}}{\gamma} $, then $|\partial(V(H'))|\ge d+1$ as $\frac{6(K+2)d^{K+2}}{\gamma}\le \gamma n $. 
As $H$ is $d$-regular, it follows that  $e_{H'}\le \frac{dm-(d+1)}{2}=d(m-1)/2-1/2$. Thus, we obtain \[d(H')=\frac{e_{H'}}{m-1}\le d/2-\frac{\gamma }{12(K+2)d^{K+2}}\le d/2-\varepsilon',\] since $\varepsilon'\ll  1/K\ll \gamma, 1/d$.

We now  assume $m\ge \frac{6(K+2)d^{K+2}}{\gamma}$.
Note that by assumption for any  vertex $w\in V(H')\cap V'$, we have $E(D[B^{+(K+1)}(w)])  \nsubseteq E(H')$. 
Therefore,  there exists a vertex $u\in B_D^{+(K+1)}(w) \cap V(H')$ such that $u$ has degree at most $d-1$ in $H'$. 
Since $H$ is $d$-regular, for a fixed vertex $u$, the number of vertices $w$ such that $u\in B_D^{+(K+1)}(w)$ is at most 
  $\sum_{0\le i\le K+1} d^i\le (K+2)d^{K+1}=:K'$. 
  We now  count the number of such vertices $u$. 
Define    $V_0=\{u\in V(H'): |N(u)\cap V(H')|\le d-1\}$. Then, a double counting argument yields 
\begin{equation}\label{eq:lowdeg}
    |V_0|\ge  \frac{|V(H')\cap V'|}{K'}.
\end{equation}

Let $\overline{V'}=\{v\in V(H): |B_{D}^{+K}(v)|< K/2\}$ be the complement of $V'$. 
By the construction of the acyclic orientation $D$, we know that for any $v\in L_i$, $|B_{D}^{+K}(v)|\ge i$.
This implies 
$\overline{V'} \subseteq \bigcup_{0\le i< K/2} L_i$. 
Since $H$ is $d$-regular, we have $|L_i|\le cd^i\le d^i/\gamma $ and thus $|\overline{V'}|\le   \sum_{0\le i< K/2} |L_i|\le \frac{K+2}{2\gamma } d^{\frac{K}{2}}$. 
Together with $m\ge \frac{6(K+2)d^{K+2}}{\gamma}$, this yields that 
\begin{equation}\label{eq:badver}
    |V(H')\cap \overline{V'}|\le | \overline{V'}|\le \frac{K+2}{2\gamma }d^{\frac{K}{2}} \le \frac{m}{4d}.
\end{equation}
Note that $|V(H')\cap V'|=m-|V(H')\cap \overline{V'}|$. By combining (\ref{eq:lowdeg}) and (\ref{eq:badver}), we obtain
\begin{align}\label{eq:lowdegsize}
    |V_0|\ge \frac{m-|V(H')\cap \overline{V'}|}{K'}\ge \frac{m-m/(4d)}{K'}\ge \frac{m}{2K'}.
\end{align}
Observe that $e_{H'}\le \frac{dm-|V_0|}{2}\le d(m-1)/2-|V_0|/3$. 
By (\ref{eq:lowdegsize}),
we have 
\begin{align*}
     d(H')=\frac{e_{H'}}{m-1} \le d/2-\frac{|V_0|}{3(m-1)} \le d/2- \frac{1}{6K'}\le d/2-\eps'.
\end{align*}
The last inequality comes from $\varepsilon'\ll 1/K$. 

\noindent
\textbf{Verify Theorem \ref{combin} (2).} 
Assume $m\le K/2$. By the definition of $d(H')$, it suffices to consider the case $m\ge 2$. 
Note that $2\le m \le K/2 \le \gamma n$. Then 
 $|\partial(V(H'))|\ge d+1$. Since $H$ is $d$-regular, it follows  that  $e_{H'}\le \frac{dm-(d+1)}{2}=d(m-1)/2-1/2$.

Therefore, $D$ satisfies the condition of Theorem~\ref{combin} for $d/2$, which completes the proof.
\end{proof}

\subsection{Proofs of Corollaries \ref{cor:almost_2dreg} and \ref{cor:Hcyclepower}}\label{sec:cor}

To prove Corollary \ref{cor:almost_2dreg}, we need the following result.

\begin{theorem}{\rm (\cite[Theorem 7.32]{Bollobas2001random})}\label{thm:almostreg} For $d\ge 3$ and any fixed $a_0\ge 3$, almost every $d$-regular graph $G$ has the following property. If $V(H)=A\cup S \cup B$ with $a=|A|\le |B|$ and no edges between $A$ and $B$, then \[ |S| \ge \begin{cases} d, & \text{if } a = 1;\\ 2d-3, & \text{if } a = 2;\\ (d-2)a, & \text{if } 3 \le a \le a_0;\\ (d-2)a_0, & \text{if } a \ge a_0. \end{cases} \] \end{theorem}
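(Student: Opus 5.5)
This statement is quoted from Bollob\'as's monograph, and the paper itself invokes it as a black box. I would therefore not reprove it from scratch here. Still, here is the route I would take if a proof were required. It is the standard first-moment argument in the configuration (pairing) model. Since every statement that holds with probability $1-o(1)$ in the pairing model transfers to uniformly random $d$-regular graphs (the probability of a simple pairing is bounded away from zero), it suffices to show that the expected number of ``bad'' triples $(A,S,B)$ in a random pairing is $o(1)$.

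Concretely, a bad triple has $a=|A|\le |B|$, no edges between $A$ and $B$, and $|S|$ below the stated bound. Fix such a triple and set $s=|S|$. Every edge leaving $A$ goes into $S$. So $A$ spans some number $e_A$ of edges, and the remaining $da-2e_A$ points of $A$ are paired into the $ds$ points of $S$.

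\emph{Small $a$.} The constraint $|S|<d$ etc.\ forces $A$ to be denser than a random graph permits. For $a=1$, a vertex with fewer than $d$ neighbours would need a loop or a multiple edge; these are excluded in simple graphs. For $a=2$, having $|S|\le 2d-4$ forces either an edge inside $A$ together with a shared neighbour, or two shared neighbours. Either way we get a short cycle on a bounded vertex set with more edges than vertices. For $3\le a\le a_0$, having $|S|<(d-2)a$ means that $A\cup S$ is a bounded set spanning strictly more edges than vertices. The expected number of such bounded subgraphs is $O(n^{v-e})=O(n^{-1})$, because each specified pairing costs about $1/(dn)$.

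\emph{Large $a$.} Here $a_0\le a\le n/2$ and $|S|<(d-2)a_0$. Count the pairings in which $A$ receives no edges from $B$. The probability that all $da$ points of $A$ pair within $A\cup S$ is at most about
\[
\binom{n}{a}\binom{n}{s}\Bigl(\frac{d(a+s)}{dn}\Bigr)^{(da-ds)/2}.
\]
For $a$ up to a small constant times $n$, this sum is tiny, since $d\ge3$ makes the exponent beat the entropy factor $\binom{n}{a}$. For $a$ linear in $n$, one uses the standard edge-expansion estimate for random regular graphs: by a first-moment count of matchings, a linear set has $\Omega(n)$ edges to its complement with probability $1-e^{-\Omega(n)}$.

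\emph{Main obstacle.} The delicate part is the intermediate range, $a$ between a large constant and $\varepsilon n$, where the binomial estimates must be balanced carefully. This is exactly the computation carried out in the reference, so in the paper I would simply cite \cite[Theorem 7.32]{Bollobas2001random}.
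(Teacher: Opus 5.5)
Citing \cite[Theorem 7.32]{Bollobas2001random} is exactly what the paper does, since it gives no proof of its own, so at that level your proposal matches it. The sketch you offer as the underlying argument, however, has two steps that fail as written.

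First, your reduction for $a=2$ is off. An edge inside $A$ plus one common neighbour is a triangle, and two common neighbours give a $4$-cycle. Both have as many edges as vertices, not more. Both also occur in a random $d$-regular graph with probability bounded away from $0$, so they cannot give an $o(1)$ bound. Write $A=\{u,v\}$ and $c=|N(u)\cap N(v)|$. Then $|N(A)\setminus A|$ equals $2d-2-c$ if $uv\in E$ and $2d-c$ otherwise. So $|S|\le 2d-4$ actually forces either $K_4$ minus an edge ($4$ vertices, $5$ edges) or $K_{2,4}$ ($6$ vertices, $8$ edges).

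A uniform way to handle every bounded $a$ is as follows. Let $F$ be the graph formed by the edges meeting $A$. Then $e_F=da-e(A)$ and $v_F\le a+|S|$, so $e_F>v_F$ whenever $|S|<(d-1)a-e(A)$. This covers $a=2$, where $e(A)\le 1$. It also covers any bounded $a\ge 3$ with $|S|<(d-2)a$ and $e(A)\le a$. If instead $e(A)>a$, then $G[A]$ itself has more edges than vertices. Note that $G[A\cup S]$ need not be dense in that case, contrary to what you wrote.

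Second, your display for $a\ge a_0$ does not beat the factor $\binom{n}{s}$ when $a$ is close to $a_0$ and $s$ is close to $(d-2)a_0$. You could use the larger, and justified, exponent $da/2$ instead of your $(da-ds)/2$. Even then the term is of order $n^{a+s-da/2}$. Take $d=a_0=3$, $a=4$, $s=2$; this is a bad triple, since $s<(d-2)a_0=3$. The term is then of order $n^{0}$, and with your exponent it is of order $n^{3}$. So the sum is not small for all $a$ up to $\varepsilon n$; it only becomes small once $a\ge C(d)\,a_0$.

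The missing step is to treat $a_0\le a\le C(d)\,a_0$ by the bounded dense-subgraph argument above. That argument never used $a\le a_0$, only that $a$ is bounded and $|S|<(d-2)a_0\le (d-2)a$. With this in place, the range from $C(d)\,a_0$ to $\varepsilon n$ is routine rather than the delicate part you identify. The linear range then follows from edge expansion, because $d|S|\ge e(A,S)=e(A,V\setminus A)$ when there are no $A$--$B$ edges.
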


We now prove Corollary \ref{cor:almost_2dreg}. 


\begin{proof}[Proof of Corollary \ref{cor:almost_2dreg}]
We will show that almost every $d$-regular graph $H$ satisfies 
$|\partial(X)| \ge d+1$ for all $X \subseteq V(H)$ with $2 \le |X| \le n/2$. 
Taking $\gamma = 1/2$ in Theorem \ref{mainthm:regular} then yields the desired conclusion. 

Let $H$ be an $n$-vertex $d$-regular graph. Fix any subset $X\subseteq V(H)$ with $2\le |X|\le n/2$. Define
        $Y = N(X) = \{v\in V(H)\setminus X : \exists u\in X \text{ such that } uv\in E(H)\},$
    and set $Z = V(H)\setminus (X\cup Y)$. Then $V(H)$ is partitioned into three disjoint parts $X\cup Y\cup Z$, and there are no edges between $X$ and $Z$.  Set $a = \min\{|X|,|Z|\}$. If $a=1$, then $|X|> |Z|=1$. Since $n$ is sufficiently large, $|Y|=n-|X|-|Z|\ge n/2-1\ge d+1$.
    Now assume $a\ge 2$ and recall that $d\ge 4$. By Theorem \ref{thm:almostreg} with $(d, \{X, Z\}, Y)$ playing the role of $(r, \{A, B\}, S)$,
we obtain that for almost every $d$-regular graph $H$,
     \[
        |Y| \ge 
        \begin{cases}
            d+1,    & \text{if } a = 2,\\
            d+2,    & \text{if } a \ge 3.
        \end{cases}
    \]

 Since every vertex in $Y$ is adjacent to at least one vertex in $X$, we have $|\partial(X)|\ge |Y|$. 
    Hence $|\partial(X)|\ge d+1$ whenever $2\le |X|\le n/2$. This completes the proof. 
 \end{proof}

\noindent
\textbf{Remark.} For $|X|=2$, degree counting gives $|\partial(X)|\ge 2d-2\ge d+2$, using $d\ge 4$. Consequently, for $d\ge 4$, the  nontrivial edge cut in almost every $d$-regular graph has size at least $d+2$.
However, the bound $|\partial(X)|\ge d+1$ already suffices for our purpose.

We now prove Corollary \ref{cor:Hcyclepower}. The corollary follows from Theorem \ref{mainthm:regular} once we show that the $d$-th power of a Hamiltonian cycle satisfies the $2d$-regular graph family condition stated in Theorem \ref{mainthm:regular}.

\begin{proof}[Proof of Corollary \ref{cor:Hcyclepower}]
Choose $1/n<\gamma \ll 1/d$. Let $H$ be the $d$-th power of a Hamilton cycle. 
By Theorem \ref{mainthm:regular}, it suffices to show that there exists a constant $\gamma >0 $ such that   $H$ satisfies 
 $|\partial(X)|\ge 2d+1$ for every $X\subseteq V(H)$ with $2\le |X|\le \gamma  n $.  


    The conditions $|X|\le \gamma n $ and $\gamma \ll 1/d$ imply that  $H[X]$ is contained in the $d$-th power of a path $P$. 
    Let us label $V(P) \cap X=\{v_1,\ldots, v_{|X|}\}$ in the natural order. If $|X|=2$, then $|\partial(X)|\ge 4d-2>2d+1$ for $d\ge 2$, as desired. Now suppose $|X|\ge 3$. 
    In the $d$-th power of a path $P$, the ends $v_1$ and $v_{|X|}$ have  degree at most $d$ in $H[X]$, and the vertex $v_2$ has degree at most $d+1$ in $H[X]$. 
    Thus, $|\partial(X)|\ge \sum_{i \in \{1,2, |X|\}} (2d-|N(v_i)\cap X|)\ge 3d-1 \ge 2d+1 $ for $d\ge 2$.
    Therefore,  the $d$-th power of a Hamilton cycle  $H$ satisfies the condition of Theorem \ref{mainthm:regular}, which completes the proof.
\end{proof}


\section{Concluding Remarks}
We study perturbation thresholds for spanning subgraph containment in the randomly perturbed model, where an arbitrary deterministic graph with at least $\varepsilon n^2$  edges is augmented by the addition of a binomial random graph. Throughout, all considered graphs are assumed to have bounded maximum degree.
Our main result, Theorem \ref{maindegenerate}, shows that for every fixed $\varepsilon>0$, there exists $\eta >0$ such that for all  $d$-degenerate graphs, the perturbation threshold in the randomly perturbed model is at most
$n^{-1/d-\eta}$.
This gives a polynomial improvement over the corresponding threshold $p\le n^{-1/d}$ in the binomial random graph model (shown by Riordan~\cite{MR1762785} for $d\ge 3$ and by Chen, Han, and Luo~\cite{chen2024thresholds} for $d=2$).

For some natural subclasses, the exponents in the two models already align. 
For example, every $K_4$-minor-free graph (in particular, every outerplanar graph)  is $2$-degenerate, by a classical result of Dirac \cite{Dirac1960}. Hence, our bound yields a perturbation threshold of at most $n^{-1/2-\eta}$, whereas in the binomial random graph the threshold is at most $n^{-1/2}$.

However, the situation is different for planar graphs. Planar graphs are $5$-degenerate, so applying Theorem \ref{maindegenerate} with $d=5$ would only give a perturbed threshold at most $n^{-1/5-\eta}$. Yet the  known threshold in $G(n,p)$ for containing a planar graph of bounded maximum degree is at most $n^{-1/3}$
(due to Riordan  ~\cite{MR1762785} and Chen, Han and Luo~\cite{chen2024thresholds}). Thus, the existence of a polynomial saving in the perturbed setting for planar graphs remains open. 

\begin{problem}
    For  constants $\varepsilon, \Delta > 0$, does there exist $\eta>0$ such that for all sufficiently large $n$, if $G$ is an $n$-vertex graph with at least $\varepsilon n^2$ edges and $H$ is an $n$-vertex planar graph with maximum degree at most $\Delta$, then $G \cup G(n, n^{-1/3-\eta})$ w.h.p. contains a copy of $H$? 
\end{problem}

Another natural but challenging problem is to investigate how large the $\eta$ in this type of results can be, e.g., in Theorem~\ref{maindegenerate}.
This has been studied for square of Hamilton cycles under the minimum degree condition by B\"ottcher, Parczyk, Sgueglia and Skokan \cite{bottcher2024square}, that is, given graph $G$ with minimum degree $\alpha n$, they determined the best possible $\eta=\eta(\alpha)$ for $p=n^{-1/2-\eta}$ in the perturbation threshold.

\section*{Acknowledgements}
JH was partially supported by the Natural Science Foundation of China (12371341).
SI was supported by the National Research Foundation of Korea (NRF) grant funded by the Korea government(MSIT) No. RS-2023-00210430, and supported by the Institute for Basic Science (IBS-R029-C4).
 JZ was supported by the China Postdoctoral Science Foundation (No. 2024M764113).

\bibliographystyle{plain}
\bibliography{ref1}
\end{document}